\newtheorem{theorem}{Theorem}
\newtheorem{definition}[theorem]{Definition}
\newtheorem{Claim}[theorem]{Claim}
\newtheorem{corollary}[theorem]{Corollary}
\DeclareMathOperator\G{\mathcal{G}}
\DeclareMathOperator\R{\mathbb{R}}
\DeclareMathOperator\F{\mathcal{F}}
\title{Extremal results on $G$-free	colorings of graphs}
\author{Yaser Rowshan$^1$ }
\keywords{Conditional Chromatic number, $G$-free coloring,  uniquely $k$-$G$-free, $G$-free minimal.}
\subjclass[2010]{05C15, 05C35.}
\address{$^1$Y. Rowshan
	Department of Mathematics, Institute for Advanced Studies in Basic Sciences (IASBS), Zanjan 45137-66731, Iran}
\email{y.rowshan@iasbs.ac.ir}
\begin{document}
	\maketitle 
	
	\begin{abstract} 
		Let $H=(V(H),E(H))$ be a graph. A  $k$-coloring of $H$ is a mapping $\pi : V(H) \longrightarrow \{1,2,\ldots, k\}$ so that each color class induces a $K_2$-free subgraph. For a  graph $G$ of order at least $2$, a $G$-free $k$-coloring of $H$
		is  a mapping $\pi : V(H) \longrightarrow \{1,2,\ldots,k\}$ so that 
		the  subgraph of $H$ induced by each color class of $\pi$ is $G$-free, i.e.  contains no  copy of $G$. The $G$-free chromatic number of $H$ is the minimum number $k$ so that  there is a $G$-free $k$-coloring of $H$, denoted by
		$\chi_G(H)$. 	A graph $H$ is uniquely $k$-$G$-free colouring if $\chi_G(H)=k$ and every  $k$-$G$-free colouring of $H$  produces the same color classes. 
		A graph $H$ is minimal with respect to $G$-free, or  $G$-free-minimal, if  for every  edges of $E(H)$ we have $\chi_G(H\setminus\{e\})= \chi_G(H)-1$.
		In this paper we give some  bounds and attribute  about  uniquely $k$-$G$-free colouring and  $k$-$G$-free-minimal. 
		
	\end{abstract}
	
	\section{Introduction} 
	All graphs $G$ considered in this paper are undirected, simple, and finite graphs. For  given graphs $G$, we denote its vertex set, edge set, maximum degree, and minimum degree by $V(G)$, $E(G)$, $\Delta(G)$, and $\delta(G)$, respectively. The number  of vertices of 
	$G$ is define by $|V(G)|$.
	For a vertex $v\in V(G)$, we use $\deg_G{(v)}$ ($\deg{(v)}$) and $N_G(v)$ to denote the degree and neighbors of $v$ in $G$, respectively. 
	
	The join of two graphs $G$ and $H$, define by $G\oplus H$, is a graph obtained from $G$ and $H$ by joining each vertex of $G$ to all vertices of $H$. The union of two graphs $G$ and $H$, define by $G\cup H$, is a graph obtained from $G$ and $H$, where $V(G\cup H) =V(G)\cup V(H)$ and $E(G\cup H) =E(G)\cup E(H)$. For convenience, we use $[n]$  instead of $\{1,2,\ldots,n\}$.
	A $k$-vertex coloring of $H$ is a partition of $V(H)$ into $k$ color classes such that vertices in the same
	class are not adjacent. Moreover, $H$ is called uniquely vertex $k$-colorable if every $k$-coloring of it induces the same partition on $V(H)$. Also, A graph $H$ is minimal with respect to $k$-vertex coloring of $H$, if  for every  edges of $E(H)$ we have $\chi(H\setminus\{e\})= \chi(H)-1$.
	
	Uniquely vertex $k$-colorable have been studied by Chartrand and Geller \cite{chartrand1969uniquely}, Aksionov \cite{aksionov1977uniquely},  Harary, Hedetniemi, and Robinson \cite{harary1969uniquely}, Bollob\'{a}s \cite{bollobas1978extremal, bollobas1978uniquely}, Borwiecki and Burchardt \cite{borowiecki1993classes}, Alishahi and Taherkhani \cite{alishahi2018extremal}, and Xu \cite{shaoji1990size}.
	An $(n, k)$-coloring of a $H$ corresponds to the partition of $V(H)$ into $n$ color classes, so that the
	induces a subgraph with each color class whose maximum degree does not exceed $k$. A graph $H$ is uniquely $(n, k)$-colorable if it is $(n, k)$-colorable and every $(n, k)$-coloring
	of $H$ produces the same color classes.  M. Frick and M. A. Henning show that the following results is true.
	\begin{theorem}\cite{frick1994extremal}\label{1}
		Suppose $H$ be a uniquely $(n, k)$-colorable
		graph, where  $ n\geq  2$ and $k\geq 1$.  Hence:
		\[|V(H)|\geq n (k+1)-1.\]
	\end{theorem}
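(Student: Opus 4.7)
The plan is to argue by contradiction: assume $|V(H)|\leq n(k+1)-2$, let $V_1,\ldots,V_n$ be the unique $(n,k)$-coloring of $H$ ordered so that $|V_1|\leq|V_2|\leq\cdots\leq|V_n|$, and exhibit a single vertex swap between two of these classes that yields a second valid $(n,k)$-coloring with a different partition, contradicting uniqueness. The quantitative input is the averaging bound: since $\sum_i|V_i|\leq n(k+1)-2<n(k+1)$, we immediately obtain $|V_1|\leq k$, which is the slack that makes a swap cheap to verify.

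The argument then splits according to $|V_1|$. If $|V_1|=1$, I would invoke the standard convention that ``uniquely $(n,k)$-colorable'' requires $n$ to be the minimum number of classes in any $(n,k)$-coloring of $H$, so $H$ admits no $(n-1,k)$-coloring. This forces $|V_2|\geq 2$, since otherwise $V_1\cup V_2$ has just two vertices, hence induced maximum degree at most $1\leq k$, and merging would produce an $(n-1,k)$-coloring. Then moving any $u\in V_2$ to $V_1$ preserves the coloring: the new $V_1\cup\{u\}$ has max degree at most $1$, while $V_2\setminus\{u\}$ is nonempty and inherits the degree bound from $V_2$. The resulting partition differs from the original. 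If instead $|V_1|\geq 2$, two subcases remain. Either some $V_j$ with $j\geq 2$ also satisfies $|V_j|\leq k$, in which case moving any $v\in V_1$ to $V_j$ is admissible because every vertex of $V_j$ currently has at most $|V_j|-1\leq k-1$ neighbors in $V_j$; or $|V_j|\geq k+1$ for every $j\geq 2$, in which case the size inequality $|V_1|+(n-1)(k+1)\leq n(k+1)-2$ forces $|V_1|\leq k-1$, and moving any $u\in V_2$ to $V_1$ is admissible since $|V_1\cup\{u\}|\leq k$ gives induced maximum degree at most $k-1$. In each situation the new partition differs from the old, contradicting uniqueness.

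The only nontrivial aspect is organizing the case split so that in every situation at least one valid swap exists; once the correct case is selected, the degree checks follow immediately from $|V_j|\leq k$ or $|V_1|\leq k-1$. The sole place the minimality interpretation of uniqueness is used is the subcase $|V_1|=|V_2|=1$, and this interpretation is in fact necessary: otherwise $n$ isolated vertices would form a uniquely $(n,k)$-colorable graph on only $n<n(k+1)-1$ vertices, violating the claimed bound.
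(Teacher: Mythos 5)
Your proof is correct: the paper only cites this result from Frick and Henning without reproducing a proof, but its proof of the analogous Theorem \ref{t1} rests on exactly the same mechanism you use, namely moving or swapping a vertex out of an undersized colour class to produce a second valid colouring that contradicts uniqueness. Your explicit observation that ``uniquely $(n,k)$-colourable'' must be read as requiring that $H$ admit no $(n-1,k)$-colouring (otherwise $n$ isolated vertices would give a counterexample to the stated bound) is a correct and necessary clarification that the paper leaves implicit.
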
 
	\begin{theorem}\cite{frick1994extremal}\label{2}
		For each  $ n\geq  2$ and $k\geq 1$, there exists a uniquely $(n, k)$-colorable graph with  $n(k + 1)-1$ member.
	\end{theorem}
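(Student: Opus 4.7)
The plan is to exhibit, for each $n\geq 2$ and $k\geq 1$, an explicit graph $H_{n,k}$ on $n(k+1)-1$ vertices that is uniquely $(n,k)$-colorable, thereby showing that the lower bound in Theorem~\ref{1} is tight.

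I would start by fixing a partition of the vertex set into $n$ intended color classes $V_1,\dots,V_n$ with $|V_i|=k+1$ for $1\leq i\leq n-1$ and $|V_n|=k$, so that $|V|=n(k+1)-1$. Within each $V_i$ I place a complete graph $K_{|V_i|}$, and between any two distinct $V_i,V_j$ I add a carefully chosen bipartite pattern --- sparse enough for the natural coloring $\pi(v)=i$ for $v\in V_i$ to remain an $(n,k)$-coloring, yet dense enough to forbid any non-trivial swap across blocks. With this set-up, the natural coloring $\pi$ is an $(n,k)$-coloring: for $i<n$ the class $V_i$ induces $K_{k+1}$ of maximum degree $k$, while $V_n$ induces $K_k$ of maximum degree $k-1$. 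Hence $\chi_{(n,k)}(H_{n,k})\leq n$.

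The heart of the argument is uniqueness. Given any $(n,k)$-coloring $\{C_1,\dots,C_n\}$ of $H_{n,k}$, I would argue in two steps. The first is a size bound: each $|C_i|\leq k+1$. This will follow from the combined intra- and inter-class edge density, which forces any class of size at least $k+2$ to contain a vertex whose intra-class degree is at least $k+1$. Together with $\sum_i |C_i|=n(k+1)-1$, this pins down the multiset of class sizes to $\{k+1,\dots,k+1,k\}$, matching the intended partition. The second step is rigidity: I would use the cross-edges between distinct $V_i$'s to show that if some $C_j$ contains vertices from two different blocks of the intended partition, then some vertex of $C_j$ necessarily has too many monochromatic neighbors, contradicting the coloring property. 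Consequently $\{C_1,\dots,C_n\}=\{V_1,\dots,V_n\}$ as partitions, which is exactly what uniqueness requires.

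The main obstacle is the rigidity step, and in particular the design of the cross-edge pattern so that every alternative partition is ruled out. Naive choices fail: the Tur\'{a}n graph $K_{k+1,\dots,k+1,k}$, obtained with independent blocks and complete cross-edges, already admits pair-swaps of vertices across blocks that yield valid $(n,k)$-colorings; at the opposite extreme, no cross-edges leaves far too many colorings. The construction must therefore tune the cross-edge density so that both the size bound and the rigidity hold simultaneously, and I expect the verification of rigidity to proceed by a short case analysis of the ways a vertex could be moved from its intended block to a neighbouring class of size $k$ or $k+1$.
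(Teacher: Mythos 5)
You are proving a statement that the paper itself only quotes from Frick and Henning \cite{frick1994extremal}; the paper contains no proof of Theorem~\ref{2}, but its Theorem~\ref{t2} establishes the analogous existence result for $G$-free colorings by a fully explicit construction, and that is the natural benchmark for your plan. Against that benchmark your proposal has a gap of the most basic kind: the theorem's entire content is the exhibition of a concrete graph, and the one thing you leave unspecified --- the ``carefully chosen bipartite pattern'' between blocks --- is exactly that content. Listing the desiderata (sparse enough for the natural coloring, dense enough for rigidity) restates the problem rather than solving it.

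Worse, the framework you do fix cannot be completed by any choice of cross-edges. If each intended class $V_i$ ($i<n$) induces $K_{k+1}$ and $V_n$ induces $K_k$, take any two blocks $V_i,V_j$ and any $u\in V_i$, $v\in V_j$, and consider the partition obtained by swapping $u$ and $v$. Deleting one vertex from $K_{k+1}$ (resp.\ $K_k$) drops every remaining intra-class degree to $k-1$ (resp.\ $k-2$), and adjoining a single new vertex raises each of those degrees by at most one and gives the new vertex degree at most $k$; the other $n-2$ classes are untouched. Hence the swapped partition is a valid $(n,k)$-coloring \emph{for every possible cross-edge pattern}, so your graph is never uniquely $(n,k)$-colorable once $n\geq 2$. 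The intended classes must not be saturated cliques. The shape of a workable construction is the one the paper uses in Theorem~\ref{t2}: each intended class is a clique on all but one of its vertices together with one special vertex $v_i$ of deliberately small degree into its own block, with distinct blocks joined completely; the near-complete join forces the class sizes through the clique number, and the few missing edges at $v_i$ are what pin $v_i$ to its block and deliver rigidity. Your write-up would need to be rebuilt around a construction of that kind before either of your two verification steps can even be attempted.
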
 
	
	\subsection{$G$-free coloring.}	
	The conditional chromatic number $\chi(H,P)$ of $H$,  is the smallest  integer  $k$ for which there 
	is a decomposition of  $V(H)$  into  $k$ color class say $V_1,\ldots, V_k$, so that  $H[V_i]$ satisfies the property $P$, where   $P$, is a graphical property and $H[V_i]$ is a the induced subgraph on $V_i$, for each $1\leq i\leq k$.
	This extension of graph coloring was presented by Harary in 1985~\cite{MR778402}. Suppose that $\G$
	be a families of  graphs, when $P$ is the feature that a subgraph induced by each color class does not contain  each copy of 
	members of $\G$, we write $\chi_{\G}(H)$ instead of $\chi(H, P)$. In this regard, we say a graph $H$ has a $\G$-free $k$-coloring if there is a  map $\pi : V(H) \longrightarrow \{1,2,\ldots,k\}$ such that  each color class  $V_i=\pi^{-1}{(i)}$  does not contain any members of $\G$. For simplicity of notation if $\G=\{G\}$, then we write $\chi_G(H)$ instead of $\chi_{\G}(H)$. 
	
	An ordinary  $k$-coloring of $H$  can be viewed as $\G$-free $k$-coloring of a graph $H$ by taking $\G=\{K_2\}$.  It was shown that for each  graph $H$, $\chi(H)\leq  \Delta(H)+1$. The well-known Brooks theorem states that   for any  connected graph $H$, $\chi(H)\leq  \Delta(H)$ if $H$ is a connected and is neither an odd $C_n$ nor a $K_n$, then $\chi(H)\leq  \Delta$ \cite{Brooks}. One can refer to \cite{chartrand1968generalization, lovasz1966decomposition, harary1985graphs, catlin1995vertex,alishahi2018extremal}  and it references for further studies
	
	A graph $H$ is uniquely $k$-$G$-free colouring if $\chi_G(H)=k$ and every  $k$-$G$-free colouring of $H$  produces the same color classes, and a graph $H$ is minimal with respect to $G$-free, or  $G$-free-minimal, if  for every  edges of $E(H)$ we have $\chi_G(H\setminus\{e\})\leq \chi_G(H)-1$. 
	
	In this article we investigate some properties of  uniquely $k$-$G$-free and $G$-free-minimal coloring of graphs as follow: 
	
	\begin{theorem}\label{A}
		Let $H$ and $G$ be two graph and $k\geq 1$ be a integers. Assume that $H$ is uniquely $k$-$G$-free colourable graph, hence we have:
		\[|V(H)|\geq k |V(G)|-1.\]
	\end{theorem}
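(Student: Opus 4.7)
The plan is to argue by contradiction: assume $|V(H)| \leq k|V(G)| - 2$ and construct a second $G$-free $k$-colouring of $H$ distinct from the given one $V_1, \ldots, V_k$, violating uniqueness. I will assume $k \geq 2$ throughout; the case $k = 1$ is degenerate and requires either $H$ to contain a copy of $G$ or separate justification. Since $\sum_i |V_i| \leq k|V(G)| - 2$ and every class is nonempty (otherwise $\chi_G(H) \leq k - 1$), a pigeonhole argument leaves two cases: (a) some class $V_j$ satisfies $|V_j| \leq |V(G)| - 2$, or (b) at least two distinct classes $V_i, V_j$ both satisfy $|V_i|, |V_j| \leq |V(G)| - 1$.

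In case (a), perform a \emph{move}: choose any vertex $v$ in another class $V_i$ and transfer $v$ to $V_j$. The new class $V_j \cup \{v\}$ has fewer than $|V(G)|$ vertices and so is automatically $G$-free; the other classes are unchanged. If $V_i \setminus \{v\}$ is still nonempty, this is a genuinely different $k$-$G$-free colouring, contradicting uniqueness; if $V_i \setminus \{v\} = \emptyset$, the remaining nonempty classes constitute a $(k-1)$-$G$-free colouring, contradicting $\chi_G(H) = k$.

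In case (b), perform a \emph{swap}: pick $v \in V_i$ and $u \in V_j$ and exchange them to form $V_i' = (V_i \setminus \{v\}) \cup \{u\}$ and $V_j' = (V_j \setminus \{u\}) \cup \{v\}$. Both new classes still contain at most $|V(G)| - 1 < |V(G)|$ vertices and hence induce $G$-free subgraphs, while the rest of the partition is untouched. Because $V_i$ and $V_j$ are disjoint, the resulting unordered partition differs from the original whenever $|V_i| + |V_j| \geq 3$, which is guaranteed as soon as $|V(G)| \geq 3$, again contradicting uniqueness. The principal obstacle is the border case $|V(G)| = 2$ of (b): then $V_i$ and $V_j$ are forced to be singletons and the swap merely permutes their labels. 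That regime reduces to ordinary uniquely $k$-chromatic graphs, for which the analogous bound $|V(H)| \geq 2k - 1 = k|V(G)| - 1$ is a classical theorem and can simply be invoked to finish.
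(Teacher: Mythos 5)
Your argument is essentially the paper's own proof: the paper derives the bound from two claims, namely that every colour class has at least $|V(G)|-1$ vertices (proved by exactly your case (a) move of a vertex into the deficient class) and that at most one class has exactly $|V(G)|-1$ vertices (proved by exactly your case (b) swap), and your pigeonhole contrapositive is just a repackaging of these. If anything you are more careful than the paper, which silently assumes $k\geq 2$ (its Claim 1 invokes a class $V_2$) and never addresses the $|V(G)|=2$ degeneracy where the swap merely permutes two singletons.
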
 
	\begin{theorem}\label{B}
		Let $G$ be a graph with $m$ vertices and $k\geq 1$ be an integers. Then: 
		\begin{itemize} 
			\item[I]: There exists a $k$-$G$-free colourable graph, with $k m-1$ members where $k-1$ class has $m$ member and one class has a $m-1$ member.
			\item[II]: If   $m\geq 2\delta(G)-1$ and $\delta(G)=1$, then there exists a uniquely $k$-$G$-free colourable graph with $km-1$ members and if $\delta \geq 2$ then there exist many $k$-$G$-free colourable of a graph with $km-1$ members.
			\item[III]: If there exists a uniquely $k$-$G$-free colourable graph with $k m-1$ members, then for  each $i\geq 1$ there exists a uniquely $k$-$G$-free colourable graph with $km+i-1$ members.
		\end{itemize}
	\end{theorem}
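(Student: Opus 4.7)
\textbf{Part I} is essentially immediate. Take $F$ to be any $m$-vertex graph that does not contain $G$ as a subgraph — for instance, the empty graph $\overline{K_m}$ when $G$ has at least one edge — and $F'$ to be any graph on $m-1$ vertices. The disjoint union of $k-1$ copies of $F$ with one copy of $F'$ has $km-1$ vertices and the obvious $G$-free $k$-coloring with the required class sizes.

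For \textbf{Part II} in the case $\delta(G)=1$, I present a join construction. Let $v\in V(G)$ have $\deg_G(v)=1$ with unique neighbor $u$, and set $G':=G-uv$, so $v$ is isolated in $G'$. Define
\[
H := G'_1 \oplus G'_2 \oplus \cdots \oplus G'_{k-1} \oplus (G-v),
\]
where each $G'_i$ is a disjoint copy of $G'$. Then $|V(H)|=km-1$, and the natural partition into parts $P_1,\ldots,P_{k-1}$ and $P_k=V(G-v)$ is a $G$-free $k$-coloring: each $P_i$ with $i<k$ has $|E(G)|-1$ edges on $m$ vertices, too few for a spanning copy of $G$, and $P_k$ has only $m-1$ vertices. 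The main technical step, and the principal obstacle, is the structural claim that every $m$-vertex subset $S \subseteq V(H)$ which intersects at least two parts contains a copy of $G$. I will establish it by picking $x \in S$ in a part $P_j$ with $|S \cap P_j|$ minimal, mapping the degree-$1$ vertex $v$ of $G$ to $x$, mapping $u$ to some vertex of $S \setminus P_j$ (which is adjacent to $x$ via the join), and routing the remaining $m-2$ vertices of $G-v$ through the other parts using the completeness of the inter-part edges. Once this is established, every color class in a $k$-$G$-free coloring of $H$ must lie inside a single $P_i$; combined with $|V(H)|=km-1$ this forces the $k$ classes to be exactly $P_1,\ldots,P_k$, yielding simultaneously $\chi_G(H)=k$ and uniqueness. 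For the case $\delta(G)\ge 2$, replacing $G'$ in each slot by $G-e$ for an arbitrary edge $e$ still produces an $m$-vertex $G$-free graph, and non-equivalent choices of the edge (permissible because $\delta(G)\ge 2$) generate several distinct $k$-$G$-free colorings of the resulting graph, which establishes non-uniqueness.

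For \textbf{Part III}, let $H$ be uniquely $k$-$G$-free colorable on $km-1$ vertices with classes $V_1,\ldots,V_k$ and $|V_1|=m-1$. Form $H'$ on $km+i-1$ vertices by adjoining $i$ new vertices $w_1,\ldots,w_i$, each made adjacent to every vertex of $V_2\cup\cdots\cup V_k$ and non-adjacent to $V_1$ and to the other $w_\ell$. In any $k$-$G$-free coloring $\pi'$ of $H'$, the restriction $\pi'|_{V(H)}$ is a $G$-free partition of $V(H)$ into at most $k$ nonempty classes — none can vanish because $\chi_G(H)=k$ — so it coincides with the unique coloring of $H$. Placing a $w_j$ into some class $V_l$ with $l\ge 2$ would make $w_j$ a universal vertex over the $m$-vertex set $V_l$, which already contains a copy of $G-v'$ for some $v' \in V(G)$, producing a forbidden copy of $G$. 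Hence every $w_j$ must lie in the class containing $V_1$; since each $w_j$ is isolated relative to $V_1$ in $H'$, no new copy of $G$ can appear there (under the mild standing assumption that $G$ has no isolated vertex), and the lifted coloring is both valid and unique.
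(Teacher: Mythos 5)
Your Part III is essentially sound and close in spirit to the paper's argument, but Parts I and II have genuine problems. For Part I, the paper's intent (made explicit in its proof, which establishes $\chi_G(H)=k$ via a clique of size $k(m-1)$) is to produce a graph whose $G$-free chromatic number \emph{equals} $k$ with the stated class sizes; your disjoint union of empty graphs satisfies $\chi_G=1$ and trivializes the statement, so it does not prove what is intended. The paper instead takes $H=\oplus_{i=1}^{k}H'_i$ where $H'_i$ is $K_{m-1}$ together with an extra vertex of degree at most $\delta(G)-1$ (and $H'_k=K_{m-1}$): the join of the $K_{m-1}$'s forms $K_{k(m-1)}$, which forces any $(k-1)$-coloring to put $m$ clique vertices in one class and hence a $K_m\supseteq G$ there. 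That density is exactly what your construction lacks.

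The serious gap is the key structural claim in Part II. For your $H=G'_1\oplus\cdots\oplus G'_{k-1}\oplus(G-v)$ with $G'=G-uv$, it is false that every $m$-set meeting two parts contains $G$. Take $G=P_m$ with $v$ an endpoint, so $G'=P_{m-1}\sqcup K_1$. Let $S=(V(G'_1)\setminus\{z\})\cup\{y\}$ where $z$ is an internal vertex of the path in $G'_1$ and $y$ lies in another part. Then $H[S]=\bigl(P_a\sqcup P_b\sqcup K_1\bigr)\oplus K_1$ with $a+b=m-2$; since $|S|=m$, any copy of $P_m$ in $H[S]$ would place exactly one vertex at $y$, forcing $P_m-w$ (which has at least $m-3$ edges) into $H[S\setminus\{y\}]$ (which has only $m-4$ edges). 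So $H[S]$ is $G$-free, your claim fails, and in fact swapping two such internal vertices between parts produces a second valid $k$-coloring, so your graph is not uniquely colorable. Your embedding sketch breaks precisely when $S\setminus\{x\}$ lies in a single part, where there are no inter-part edges to route through and the intra-part graph is too sparse. Separately, your argument for $\delta(G)\ge 2$ varies the choice of deleted edge, which produces \emph{different graphs}, not different colorings of one fixed graph; the paper instead fixes its construction and exhibits two distinct colorings by swapping a neighbor of $v_1$ with a neighbor of $v_2$ across two classes, which is what non-uniqueness requires.
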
 
	\begin{theorem}\label{C}
		Let $H$ and $G$ be two graph and $k\geq 1$ be a integers. Assume that $H$ is uniquely $k$-$G$-free colourable graph, $|V(G)|=m$, $\delta(G)=\delta$, $|E(H)|=e_1$ and $|E(G)|=e_2$, hence:
		\[e_1\geq  k(e_2-\delta+\frac{k+1}{2}(m\delta)).\]
	\end{theorem}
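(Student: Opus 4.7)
The plan is to split $e_1 = e_{\mathrm{in}} + e_{\mathrm{out}}$, where $e_{\mathrm{in}}$ counts edges with both endpoints in a single color class and $e_{\mathrm{out}}$ counts edges joining two distinct classes, and to lower-bound each summand. Fix the unique $G$-free $k$-coloring $V(H) = V_1 \sqcup \cdots \sqcup V_k$. The driving lemma I would prove first is the following recoloring observation: for every vertex $v$ and every class $V_j$ with $v \notin V_j$, the induced subgraph $H[V_j \cup \{v\}]$ must contain a copy of $G$. Otherwise reassigning $v$ to class $j$ would yield a second valid $G$-free $k$-coloring, contradicting uniqueness. Because $H[V_j]$ is itself $G$-free, this copy of $G$ must use $v$, and because every vertex of $G$ has degree at least $\delta$, $v$ has at least $\delta$ neighbors in $V_j$.

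From this observation I would deduce the two pieces in parallel. For the within-class contribution, fix a class $V_i$ and select an outside vertex $v$ minimizing $|N(v)\cap V_i|$; by the driving lemma this minimum is exactly $\delta$, and so in the forced $G$-embedding into $V_i\cup\{v\}$ the vertex $v$ plays the role of a minimum-degree vertex of $G$. The remaining $m-1$ vertices of the copy lie inside $V_i$ and carry at least $e_2-\delta$ of the edges of $G$. Hence $H[V_i]$ has at least $e_2-\delta$ edges, and summing over the $k$ classes gives $e_{\mathrm{in}} \geq k(e_2-\delta)$.

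For the between-class contribution, each vertex of $V_i$ sends at least $\delta$ edges to $V_j$ for every $j\neq i$, yielding the pairwise estimate $|E(V_i,V_j)| \geq \delta\max(|V_i|,|V_j|)$. Summing this over unordered pairs $\{i,j\}$ and plugging in the class-size information from Theorems \ref{A} and \ref{B} (at most one class has only $m-1$ vertices, the other $k-1$ have size at least $m$) produces a lower bound of the form $\tfrac{k(k+1)}{2}m\delta$ on $e_{\mathrm{out}}$. Adding the two contributions yields $e_1 \geq k(e_2-\delta) + \tfrac{k(k+1)}{2}m\delta = k\bigl(e_2-\delta + \tfrac{k+1}{2}m\delta\bigr)$, as required.

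The main obstacle is the between-class step: a naive vertex-wise degree sum gives only a coefficient $(k-1)$ in front of $m\delta$, so extracting the sharper $(k+1)$ requires combining the pairwise bound $\delta\max(|V_i|,|V_j|)$ with a careful choice of which class realizes the extremal size $m-1$ from Theorem~\ref{A}. The within-class step is more routine, but the selection of $v$ as an outside vertex minimizing $|N(v)\cap V_i|$ is essential in order to save the full $\delta$, rather than only $\Delta(G)$, of edges when restricting the forced $G$-copy to $V_i$.
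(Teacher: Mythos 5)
Your decomposition and your recoloring lemma are exactly the paper's: the paper likewise splits $e_1$ into intra-class and inter-class edges and uses uniqueness to force $|N(v)\cap V_j|\geq\delta$ for every $v\notin V_j$ and $|E(H[V_i])|\geq e_2-\delta$ for every class. The genuine gap is the between-class step, and you correctly sensed it but then asserted it away. Summing $|E(V_i,V_j)|\geq\delta\max(|V_i|,|V_j|)$ over the $\binom{k}{2}$ unordered pairs cannot yield $\frac{k(k+1)}{2}m\delta$: by Theorem~\ref{A} the extremal configuration has $k-1$ classes of size $m$ and one of size $m-1$, so every pair contributes at most a guaranteed $m\delta$ and the total is $\binom{k}{2}m\delta=\frac{k(k-1)}{2}m\delta$; no choice of which class realizes the size $m-1$ changes the number of pairs. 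There is no way to close this gap because the stated inequality is false. Take $G=K_2$ (so $m=2$, $\delta=1$, $e_2=1$) and $k=2$: the path $P_3$ is uniquely $2$-$K_2$-free colourable (i.e.\ uniquely $2$-colourable) with $2$ edges, while the theorem demands $e_1\geq 2(1-1+3)=6$. What your argument actually proves is $e_1\geq k(e_2-\delta)+\frac{k(k-1)}{2}m\delta$, which is tight for $P_3$.

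For what it is worth, the paper's own proof commits precisely the same slip: it bounds the inter-class edges by $\sum_{i=1}^{k-1}(k-i)m\delta$ and then evaluates this sum as $\frac{k(k+1)}{2}m\delta$, when in fact $\sum_{i=1}^{k-1}(k-i)=\frac{k(k-1)}{2}$. A second, smaller issue shared by both arguments concerns the intra-class count: from a copy of $G$ in $H[V_i\cup\{v\}]$ one only obtains $e_2-\deg_G(u)$ edges inside $V_i$, where $u$ is the vertex of $G$ whose role $v$ plays, and nothing forces $u$ to have minimum degree. Your device of choosing $v$ to minimize $|N(v)\cap V_i|$ does not guarantee that this minimum equals $\delta$ (the recoloring lemma gives only a lower bound), so without further argument the safe intra-class bound is $e_2-\Delta(G)$ rather than $e_2-\delta$.
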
 
	\begin{theorem}\label{D}
		$H$ is $k$-$G$-free-minimal iff either $H$ is a graph $K_{(k-1)(m-1)+1}$ minus the edges of a $1$-factor, when $k$ is even, or $H\cong H_1\oplus K_1$, where $H_1$ is a graph $K_{(k-1)(m-1)}$ minus the edges of a $1$-factor, when $k$ is odd, where $G$ be a such connected graph with $m$-members.
	\end{theorem}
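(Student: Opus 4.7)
The plan is to prove the biconditional by handling the two implications separately, with most of the effort concentrated in the necessity direction. Set $n = (k-1)(m-1)+1$ throughout.

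For sufficiency with $k$ even, take $H = K_n - F$ with $F$ a $1$-factor, and exhibit a $k$-$G$-free coloring consisting of $k-2$ classes of size $m-1$ (each inducing $K_{m-1}$, which has fewer than $|V(G)|$ vertices and so is $G$-free) together with one class whose induced subgraph is $K_m$ minus at least one edge of $F$; this final class is chosen with enough non-edges from $F$ so that $G$ does not embed into it. Nonexistence of a $(k-1)$-coloring is a pigeonhole count: $n > (k-1)(m-1)$ forces some class to exceed $m-1$ vertices, and then the density of $H$ produces a copy of $G$ in it. Edge-criticality follows by deleting any edge $e$ of $H$ and merging the two color classes at the endpoints of $e$: the merged class picks up the extra non-edge $e$, which combined with the existing non-edges of $F$ inside the class keeps it $G$-free. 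The odd-$k$ case reduces to the even case by observing that in $H \cong H_1 \oplus K_1$, the apex vertex must occupy a color class of its own, leaving $H_1$ to be handled by the previous argument.

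For necessity, let $H$ be $k$-$G$-free-minimal and fix a $k$-$G$-free coloring $V_1,\ldots,V_k$. I proceed in three stages. First, a pigeonhole/packing argument yields $|V(H)| \geq (k-1)(m-1)+1$, and combining the minimality hypothesis with Theorem~A-type reasoning applied to suitable uniquely-colorable subgraphs forces equality. Second, for any two color classes $V_i$ and $V_j$, every inter-class pair must be an edge of $H$: if some pair $uv$ with $u \in V_i, v \in V_j$ were already a non-edge, then deleting an arbitrary edge $e$ would fail to drop $\chi_G(H)$, because the flexibility allowed by the pre-existing non-edge $uv$ already yields a valid $k$-coloring that is not improved by removing $e$, contradicting minimality. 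Third, within each class the missing edges must form a matching, since two missing edges sharing a vertex would permit a concrete edge deletion leaving $\chi_G$ unchanged; these class-local matchings are then patched together using the edge-criticality at cross-class edges into a global $1$-factor (for even $k$) or a $1$-factor on $V(H)\setminus\{v\}$ for a single apex vertex $v$ (for odd $k$), the dichotomy being dictated by the parity of $n$.

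The main obstacle is the necessity direction, specifically propagating the local edge-criticality condition into the global structural conclusion that $H$ is $K_n$ minus a $1$-factor. Each intra-class non-edge beyond a matching and each potential missing inter-class edge must be ruled out by exhibiting a single edge whose deletion leaves $\chi_G(H)$ unchanged, and carefully tracking how many non-edges a color class can accommodate while remaining $G$-free is delicate; this is where the hypotheses that $G$ is connected and has exactly $m$ vertices are essential. A secondary subtlety is the parity dichotomy between even and odd $k$, which reflects whether $n = (k-1)(m-1)+1$ is itself even and hence whether $H$ can literally be $K_n$ minus a $1$-factor or must instead take the join form with a single apex vertex.
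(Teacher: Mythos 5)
There is a genuine gap, and it starts with the identity of $G$: in the paper this theorem is stated only for $G\cong K_m$ minus a perfect matching $\frac{m}{2}K_2$ with $m$ even and for $H$ a spanning subgraph of $K_n$, $n=(k-1)(m-1)+1$, with $\chi_G(H)=k$. You treat $G$ as an arbitrary connected graph on $m$ vertices, and in that generality the statement is simply false (take $G=K_m$: then $K_n$ itself is $k$-$G$-free-minimal, since deleting any edge $uv$ lets $\{u,v\}$ sit inside a $G$-free class of size $m$, yet $K_n$ is not $K_n$ minus a $1$-factor). The one fact that drives the whole proof, and which your proposal never isolates, is this: since $\delta(G)=m-2$ and any copy of $G$ inside an $m$-set must be spanning, an $m$-subset $S$ of $H$ contains $G$ if and only if every vertex of $S$ has at most one non-neighbour in $S$. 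This breaks your sufficiency argument at every step. For $H=K_n-F$ every $m$-set induces $K_m$ minus a partial matching, which \emph{contains} $G$; so your size-$m$ colour class ``with enough non-edges from $F$'' is never $G$-free (and your count of $k-2$ classes of size $m-1$ plus one of size $m$ is a $(k-1)$-colouring, not a $k$-colouring --- the correct witness for $\chi_G(H)\le k$ is $k-1$ cliques of size $m-1$ plus a singleton). Likewise, edge-criticality cannot be obtained by merging two colour classes: the merged class has $2m-2$ vertices and still contains $G$. The paper's mechanism is different and essential: after deleting $e=uv$, the endpoint $u$ acquires a second non-neighbour, so an $m$-set containing $u$ and both its non-neighbours is $G$-free, and the remaining $(k-2)(m-1)$ vertices split into $k-2$ classes of size $m-1$, giving $\chi_G(H\setminus e)\le k-1$.

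The necessity direction has the same missing idea. Your stages 2 and 3 (``every inter-class pair is an edge'', ``intra-class non-edges form a matching'') are organised around a colour partition that plays no role, and the justifications offered (``the flexibility allowed by the pre-existing non-edge already yields a valid $k$-colouring that is not improved by removing $e$'') are not arguments. The correct route is one line: if some vertex of $H$ had two non-neighbours, the observation above would produce a $G$-free class of size $m$ and hence $\chi_G(H)\le k-1$; so $\delta(H)\ge n-2$ and the complement $\overline{H}$ is a matching. Minimality then forbids two vertices of full degree (they would be joined by an edge $e$ whose deletion leaves $\overline{H}+e$ still a matching, so $\chi_G(H\setminus e)=k$), which forces $\overline{H}$ to be a perfect matching when $n$ is even ($k$ even) and a near-perfect matching with one apex vertex when $n$ is odd ($k$ odd). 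Without pinning down $G$ and extracting the degree characterisation of $G$-freeness for $m$-sets, neither direction of your outline can be completed.
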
 
	\section{Uniquely $G$-free graph}
	\begin{definition}
		A graph $H$ is uniquely $k$-$G$-free colouring if $\chi_G(H)=k$ and every  $k$-$G$-free colouring of $H$  produces the same color classes.
	\end{definition}
	
	In this section we investigate some properties of  uniquely $k$-$G$-free  coloring of graphs and give a lower bounds on the vertices of uniquely $k$-$G$-free  graphs $H$.  In the following theorem  we determine a lower bound for the size of uniquely $k$-$G$-free  coloring of $V(H)$.
	\begin{theorem}\label{t1}
		Let $H$ and $G$ be two graph and $k\geq 1$ be a integers. Assume that $H$ is uniquely $k$-$G$-free colourable graph (u-k-G-f), then:
		\[|V(H)|\geq k |V(G)|-1.\]
	\end{theorem}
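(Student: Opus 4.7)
Let $V_1,\dots,V_k$ be the color classes of the unique $k$-$G$-free coloring of $H$ and write $m=|V(G)|$. My plan is to bound $|V_j|$ from below for every color class by local modifications of the coloring, then sum.

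First I will show that every class satisfies $|V_j|\geq m-1$. Fix $j$ and choose any vertex $v\in V_i$ with $i\neq j$; reassigning $v$ from $V_i$ to $V_j$ yields a new partition in which $V_i\setminus\{v\}$ remains $G$-free as a subset of $V_i$ and all other classes are unchanged. Because the original coloring is the unique $k$-$G$-free coloring, this modification must fail to be a valid $k$-$G$-free coloring, so the new class $V_j\cup\{v\}$ is forced to contain a copy of $G$. In particular $|V_j|\geq m-1$. (A mild wrinkle: if some $V_i=\{v\}$, then moving $v$ leaves only $k-1$ nonempty classes, which would contradict $\chi_G(H)=k$ unless $V_j\cup\{v\}$ already contains a copy of $G$; so the same conclusion holds in that case.)

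Next I will argue that at most one class has size exactly $m-1$. Suppose for contradiction that $|V_i|=|V_j|=m-1$ for two distinct indices, and pick $u\in V_i$ and $v\in V_j$. Swap them to form $V_i'=(V_i\setminus\{u\})\cup\{v\}$ and $V_j'=(V_j\setminus\{v\})\cup\{u\}$, leaving all other classes untouched. Both $V_i'$ and $V_j'$ have only $m-1$ vertices, hence trivially contain no copy of $G$, so the swap produces a valid $k$-$G$-free coloring. For $m\geq 3$ this is a genuinely different partition (disjointness of $V_i$ and $V_j$ forces $V_i'\neq V_j$), contradicting uniqueness.

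Putting the two bounds together, at most one class has size $m-1$ and every other has size at least $m$, giving $|V(H)|\geq(k-1)m+(m-1)=km-1$. The step I expect to be the main obstacle is ruling out the degenerate situations in which the modified coloring coincides with the original partition up to a permutation of color labels: the pigeonhole computation at the end is routine, but one must be careful that each local move or swap produces a genuinely distinct $k$-partition of $V(H)$, since that is precisely what forces a copy of $G$ to appear in the affected class.
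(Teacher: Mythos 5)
Your argument is essentially identical to the paper's: the same two claims (every class has at least $m-1$ vertices, shown by moving a single vertex into a too-small class; at most one class has exactly $m-1$ vertices, shown by swapping two vertices between two such classes) followed by the same summation $(k-1)m+(m-1)=km-1$. You are in fact slightly more careful than the paper, and the caveat you flag in the swap step is a genuine one that the paper overlooks: when $m=2$ the swap returns the identical partition, and the theorem actually fails in that case (take $G=K_2$ and $H=K_k$, which is uniquely $k$-$G$-free colourable with only $k<2k-1$ vertices), so both your proof and the paper's implicitly need $m\geq 3$.
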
 
	\begin{proof}
		Assume that $|V(H)|=n$ and $|V(G)|=m$ and let $V_1,V_2,\ldots, V_k$ be a uniquely $k$-partition of $V(H)$ where $H[V_i]$ is a $G$-free class.  Now, considering the following claim.

		\begin{Claim}
			$|V_i|\geq m-1$ for each $i\in[k]$.
		\end{Claim}
		\begin{proof}By contrary, suppose that there exist at least one $i$ say  $i=1$, such that, $|V_1|\leq m-2$. Set $v\in V_2$ and set $V'_1=V_1\cup \{v\}$,  $V'_2=V_2\setminus \{v\}$ and $V'_i=V_i$ for $i\geq3$, hence we have $H[V'_i]$ is $G$-free, a contradiction.
		\end{proof}
		\begin{Claim}
			For at must one $i$, we have $|V_i|=m-1$.
		\end{Claim}
		\begin{proof} By contrary, suppose that   $|V_1|=|V_2|= m-1$. In this case set $v_i\in V_i$ for $i=1,2$, and set $V'_1=(V_1\setminus \{v_1\})\cup \{v_2\}$,  $V'_2=(V_2\setminus \{v_2\})\cup \{v_1\}$ and $V'_i=V_i$ for $i\geq3$, hence we have $H[V'_i]$ is $G$-free, a contradiction again.
		\end{proof}	
		Now by Claim $1,2$ we have $|V(H)|\geq k |V(G)|-1$ and the proof is complete.
	\end{proof}
	In the next theorem we establish that the bounds in Theorem \ref{t1} is best possible. Also we show that if there exists a uniquely $k$-$G$-free colourable graph with $km-1$ members, then for  each $i\geq 1$ there exists a u-k-G-f with $km+i-1$ members.
	\begin{theorem}\label{t2}
		Let $G$ be a graph with $m$ vertices and $k\geq 1$ be an integers. Then: 
		\begin{itemize} 
			\item[I]: There exists a $k$-$G$-free colourable graph, with $k m-1$ members where $k-1$ class has $m$ member and one class has a $m-1$ member.
			\item[II]: If   $m\geq 2\delta(G)-1$ and $\delta(G)=1$, then there exists a u-k-G-f with $km-1$ members and if $\delta \geq 2$ then there exist many $k$-$G$-free colourable of a graph with $km-1$ members.
			\item[III]: If there exists a u-k-G-f with $k m-1$ members, then for  each $i\geq 1$ there exists a u-k-G-f with $km+i-1$ members.
		\end{itemize}
	\end{theorem}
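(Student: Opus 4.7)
The plan is to handle the three parts in sequence, with Parts~I and~II built around a complete multipartite graph and Part~III proved by induction on~$i$. For Part~I, take $H = K_{m,m,\ldots,m,m-1}$, the complete $k$-partite graph with $k-1$ parts of size $m$ and one part of size $m-1$. Colour each part with a distinct colour; every class is independent in $H$, hence $G$-free, and the class sizes are exactly $m,m,\ldots,m,m-1$. This supplies the desired $k$-$G$-free colouring.

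For Part~II, first dispose of the case $\delta(G) \ge 2$ by keeping $H$ as above: swapping a single vertex between two size-$m$ classes in the natural colouring produces a class whose induced subgraph is the star $K_{1,m}$; since $G$ has no degree-$1$ vertex, $G \not\subseteq K_{1,m}$, so the swapped colouring is still $G$-free, and varying the swap yields many distinct $G$-free $k$-colourings. For the case $\delta(G) = 1$, modify each size-$m$ part of $H$ by embedding inside it a $G$-free graph $F$ whose minimum vertex cover is at least $2$ (so that deleting any single vertex of $F$ still leaves an edge); a concrete candidate is $F = K_3$ together with $m-3$ isolated vertices, which the bound $m \ge 2\delta(G)-1$ makes available. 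Then any $G$-free $k$-colouring whose class $V_i$ meets two distinct parts of $H$ contains a surviving edge inside one of those parts together with a cross-edge from the join, and placing a leaf of $G$ at the moved vertex, its unique neighbour on the surviving edge, and the remainder of $G$ on the rest extends to an embedding of $G$ into $V_i$, contradicting $G$-freeness. Combined with Theorem~\ref{t1} and its claims, which pin down the size profile as one class of size $m-1$ and the rest of size $m$, this forces every class to coincide with a part, giving uniqueness.

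For Part~III, induct on $i$; it suffices to pass from $n$ to $n+1$ vertices. Given a uniquely $k$-$G$-free colourable graph $H_0$ with colour classes $V_1,\ldots,V_k$, form $H_0'$ by adjoining a new vertex $v$ joined to every vertex of $V_2\cup\cdots\cup V_k$ but to no vertex of $V_1$. Any $G$-free $k$-colouring of $H_0'$ restricts to a $G$-free $k$-colouring of $H_0$, which by uniqueness must be $V_1,\ldots,V_k$, so $v$ lies in some class $V_j\cup\{v\}$. If $j \ne 1$, then $v$ is a universal neighbour of $V_j$, and the same embedding argument as in Part~II produces a copy of $G$ inside $V_j \cup \{v\}$, a contradiction. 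Hence $v \in V_1$ is forced, and $H_0'$ is uniquely $k$-$G$-free colourable on $n+1$ vertices; iterating $i$ times yields the required graph on $km+i-1$ vertices. The main obstacle throughout is the embedding step: one must verify that $F$ together with a universally adjacent extra vertex contains a copy of $G$ whenever $\delta(G)=1$, and arranging the three simultaneous properties of $F$ (being $G$-free, robust to single-vertex deletion, and vulnerable to universal augmentation) is the only place the structural hypothesis on $G$ really enters.
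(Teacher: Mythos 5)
There is a genuine gap, and it is located at the very first step: the base graph $H=K_{m,m,\ldots,m,m-1}$ does not in general satisfy $\chi_G(H)=k$, which is what both the extremal interpretation of Part~I and the definition of ``uniquely $k$-$G$-free colourable'' in Part~II require. A complete multipartite graph with $k$ parts has clique number $k$, so for instance when $G=K_m$ with $m>k$ the whole vertex set of your $H$ is already $G$-free and $\chi_G(H)=1$; the same collapse happens for many other dense $G$. Everything downstream inherits this defect: the swap argument for $\delta(G)\ge 2$ only exhibits many $k$-colourings of a graph that does not need $k$ colours, and in the $\delta(G)=1$ case your auxiliary graph $F=K_3\cup(m-3)K_1$ fails the key embedding you need --- $K_1\oplus\bigl(K_3\cup(m-3)K_1\bigr)$ does not contain, say, $P_m$ for $m\ge 6$, so ``placing a leaf of $G$ at the moved vertex and the remainder of $G$ on the rest'' does not extend to a copy of $G$. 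The idea you are missing is that the colour classes must be \emph{almost complete} rather than independent: the paper takes $k$ copies of $K_{m-1}$, attaches to the $i$-th copy (for $i\le k-1$) an extra vertex $v_i$ joined to at most $\delta(G)-1$ vertices of that copy (so each class of size $m$ is $G$-free because its $m$-th vertex has degree below $\delta(G)$), and then forms the \emph{join} of all these pieces. The join forces the union of the $K_{m-1}$'s to induce $K_{k(m-1)}$, and since any $G$-free set meets a clique in at most $m-1$ vertices this pins $\chi_G(H)=k$; uniqueness for $\delta(G)=1$ and non-uniqueness for $\delta(G)\ge 2$ then both reduce to whether $K_{m-1}$ plus a single pendant edge contains $G$.

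Your Part~III is essentially sound and in fact close to the paper's argument, though you should not route the key step through Part~II. The fact you need --- that for every $j\ne 1$ some copy of $G$ appears in $V_j\cup\{v\}$ --- follows directly from unique colourability of $H_0$: for any $w\notin V_j$ the set $V_j\cup\{w\}$ must contain a copy of $G$ through $w$ (otherwise moving $w$ into $V_j$ would give a second valid colouring), and since your new vertex $v$ dominates $N(w)\cap V_j$ you may substitute $v$ for $w$ in that copy. The paper does the same thing more economically, joining the new vertex only to $N(w)\cap G_j$ for a witnessing copy $G_j$ rather than to all of $V_2\cup\cdots\cup V_k$. You should also note explicitly that placing $v$ in $V_1$ keeps $H_0[V_1]\cup\{v\}$ $G$-free only because $G$ has no isolated vertices.
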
 
	\begin{proof}
		Suppose that $H_i$ for $i=1,2,\ldots,k+1$ be a graph, where  $H_i\cong K_{m-1}$ for each $i=1,2,\ldots,k$, and $H_{k+1}\cong K_{k-1}$ where $V(H_{k+1})=\{v_1,v_2,\ldots,v_{k-1}\}$. For each $i=1,2,\ldots,k$ define $H'_i=H_i\cup \{v_i\}$, so that $v_i$ is adjacent to  at most $\delta(G)-1$ vertices of $V(H_i)$, and define $H'_{k}=H_{k}$. Set $H=\oplus_{i=1}^{i=k}H'_i$. Hence, we can say that $|V(H)|=km-1$. Since $|V(H'_i)|=m$ and $|N_{H'_i}(v_i)|\leq \delta-1$, we  can  check that for each $i$, $H'_i$ is $G$-free, that is  $\chi_G(H)\leq k$. Now we have the following claim:
		
		\begin{Claim}\label{cl1}
			$\chi_G(H)=k.$
		\end{Claim}
		\begin{proof}For $k=1$ its clearly, hence assume that $k\geq 2$. By contradiction assume that $\chi_G(H)\leq k-1$, and let $V_1,V_2,\ldots, V_{k-1}$ be a $(k-1)$-partition of $V(H)$ where $H[V_i]$ is a $G$-free class. Since $H[\cup_{i=1}^{i=k}H_i]\cong K_{k(m-1)}$ we have  $\omega(H)\geq k(m-1)$, hence for at least one  $i$ we have $|V_i\cap (\cup_{i=1}^{i=k}H_i)\geq m$ a contradiction. That is $\chi_G(H)\geq k$ and the proof is complete.
		\end{proof}	
		Therefore by Claim \ref{cl1} and by definition $H$ one can say  that the  part (I) is true.	To prove $(II)$, by Claim \ref{cl1}, assume that  $V_1,V_2,\ldots, V_{k}$ be a $k$-partition of $V(H)$, where $H[V_i]$ is a $G$-free for each $i\in[k]$ and $H$ is a graph  constructed  in part I. Hence, it easy to check that  the following claim is true.
		
		\begin{Claim}\label{cl2} 
			$|V_i\cap (\cup_{i=1}^{i=k}H_i)|= m-1$ for each $i\in [k]$.
		\end{Claim}
		Therefore,	by Claim \ref{cl2} we have the  claims as  follow:
		
		\begin{Claim}\label{cl3}
			$|V_i|\leq m$ for $i=1,2,\ldots,k$.
		\end{Claim}
		\begin{proof}By contrary, suppose that  $|V_i|\geq m+1$ for at least one $i$, say $i=1$, and assume that $\{x_1,x_2,\ldots,x_{m+1}\} \subseteq V_1$.  Since $H[V_1]$ is $G$-free, $\chi_G(H)=k$ and by Claim \ref{cl2},  we have $|V_1\cap (\cup_{i=1}^{i=k}H_i)|= m-1$. W.l.g  let $V_1\cap (\cup_{i=1}^{i=k}H_i)=\{x_1,x_2,\ldots,x_{m-1}\}$, that is $x_m,x_{m+1} \in V(H_{k+1})$. W.l.g suppose that $x_m=v_1$ and $x_{m+1}=v_2$. So, since $H[V_i]$ is $G$-free, $|N_H(v_i)\cap \{x_1,x_2,\ldots,x_{m-1}\}|\leq \delta-1$, that is $|\{x_1,x_2,\ldots,x_{m-1}\}\cap V(H_i)|\geq m-\delta$ for $i=1,2$. As $m\geq 2\delta-1$ one can say that there exist at least one $i\in [2]$, say $i=1$ so that $|N_H(v_1)\cap \{x_1,x_2,\ldots,x_{m-1}\}|\geq \delta$, a contradiction to $|N_H(v_i)\cap \{x_1,x_2,\ldots,x_{m-1}\}|\leq \delta-1$, that is the assumption does not hold and the proof of claim is complete.
		\end{proof}		
		Now since $n=km-1$ and by Claim \ref{cl1}, \ref{cl2} and \ref{cl3}, let $|V_i|=m$ for $i=1,2,\ldots,k-1$ and $|V_k|=m-1$. So, suppose that $\delta=1$ and consider the following claim:  
		
		\begin{Claim}\label{c4} 
			The coloring provided is unique. 
		\end{Claim}
		\begin{proof}Consider $V_{i'}$. Since $|V_{i'}|=m$ for each $i'\in[k-1]$ then $|V_{i'}\cap (\cup_{i=1}^{i=k}H_i)|= m-1$ by Claim  \ref{cl2}, and $|V_{i'}\cap V(H_k)|=1$ for each $i'\in[k-1]$, let $v_{i'}\in V_{i'}\cap V(H_k)$ for each $i'\in[k-1]$. If there exists $i,j\in[k]$ where $|V_{i'}\cap V(H_i)|\neq 0\neq |V_{i'}\cap V(H_j)|$, then one can check that  $\deg_{H[V_{i'}]}(v_{i'})\geq 1$, therefore $G\subseteq H[V_{i'}]$, a contradiction. Hence, $V_{i'}=V(H_i)$ for one $i\in[k]$, so, w.l.g suppose that ${i'}=i$ for each $i'\in[k-1]$. Now, as $H[V_{i'}]$ is $G$-free and $\delta=1$, we have $v_{i'}=v_i$, otherwise  $G\subseteq H[V_{i'}]$ a contradiction again. Hence it easy to check that $V_{i'}=V(H'_i)$ for each $i'\in[k]$  and this coloring is unique. 
		\end{proof}
		Therefore by Claim by \ref{c4}, if $\delta(G)=1$ then  the coloring provided is unique. Now, by considering $\delta(G)$ we have the following claim:
		
		\begin{Claim}\label{c5} 
			If,  $\delta(G)\geq 2$, then the coloring provided $H$  is not unique, where $H$ is a graph as define in part I. 
		\end{Claim}
		\begin{proof} As  $\chi_G(H)=k$, suppose that $V_1,V_2\ldots, V_k$ be are coloring of $V(H)$, where, $V_i=V(H_i)$ for each $i=1,2,\ldots,k$, therefore, $H[V_i]$ is $G$-free. Consider $V_i$, for $i=1,2$, as $v_i\in V_i$  and $|N(v_i)\cap V_i|=\delta-1$ for each $i\in \{1,2\}$ and $\delta\geq 2$, w.l.g suppose that $v_i'\in N(v_i)\cap V_i$ for each $i\in \{1,2\}$. Now, considering $V'_1,V'_2\ldots, V'_k$ be are partition of $V(H)$, where $V'_i=V_i$ for each $i=3,4,\ldots,k$, and $V'_1=V_1\setminus \{v'_1\}\cup \{v'_2\}$ and $V'_2=V_2\setminus \{v'_2\}\cup \{v'_1\}$.  Therefore, by definition $H$, one can check that $v_1v'_2\in E(H)$, $v_2v'_1\in E(H)$, and $|N(v_i)\cap V'_i|=\delta-1$ for each $i=\{1,2\}$. Hence, as, $V_i=V'_i$, for each $i\geq3$, and for $i=1,2$,  $|V_i'|=m$, and   $|N(v_i)\cap V'_i|=\delta-1$, we can say that $H[V'_i]$ is $G$-free. Hence, since $V_i\neq V'_i$ for $i=1,2$, we have  the coloring provided $H$  is not unique. 
		\end{proof}
		Therefore by Claim \ref{c5}, if $\delta(G)\geq 2$ then  the coloring provided is not unique and by considering
		any $v\in V_i$ and ,$v'\in V_j$ for each $i,j\in [k]$ we can say there exist many $k$-$G$-free colourable of a graphs $H$ with $km-1$ members.
		
		To prove (III), assume that $H$ be a u-k-G-f  with $k m-1$ members. Suppose that $V_1$, $V_2, \ldots, V_k$ be the partition of $V(H)$, so that $H[V_i]$ is $G$-free  for each $i\in [k]$ and suppose that $|V_k|=m-1$. Therefore, since  $H$ be a u-k-G-f, one can say that $ H[V_i\cup \{v\}]$ contain a copy of $G$ say $G_i$ consist of $v$, for each $v\in V_k$ and each $i\leq k-1$. W.l.g assume that $V'_i=N(v)\cap G_i$.  Set $H'=H\cup\{u\}$, where $u$ is adjacent to all vertices of $V'_i$, for each $i=1,2,\ldots,k-1$. Now, in $H'$, for $i=1,\ldots ,k$ set $W_i$, so that $W_i=V_i$ for each $i\leq k-1$ and $W_k=V_k\cup \{u\}$. As, $W_i=V_i$ for each $i\leq k-1$, $N(u)\cap V_k=\emptyset$ and  $H$ be a u-k-G-f, one can check that, $H'$ be a u-k-G-f graph with $km$ members. By this way it is easy to check that for each $i\geq 1$ there exists a uniquely $k$-$G$-free colourable graph with $km+i-1$ members.
	\end{proof} 
	Suppose that $H$ is uniquely $k$-$G$-free  graph where $|E(H)|=e_1$ and $|E(G)|=e_2$, in the following theorem  we determine a lower bound for the size of  $E(H)$.
	\begin{theorem}\label{t0}
		Let $H$ and $G$ be two graph and $k\geq 1$ be a integers. Suppose that $H$ is u-k-G-f,  where $|V(G)|=m$, $\delta(G)=\delta$, $|E(H)|=e_1$ and $|E(G)|=e_2$, then:
		\[e_1\geq  k(e_2-\delta+\frac{k+1}{2}(m\delta)).\]
	\end{theorem}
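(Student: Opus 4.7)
The plan is to leverage uniqueness of the $k$-coloring to extract two kinds of structural information---a cross-class degree bound at every vertex and a near-$G$ induced structure inside every class---and then add up the resulting edge contributions. Let $V_1,\ldots,V_k$ denote the unique $G$-free $k$-partition of $V(H)$; by the proof of Theorem~\ref{t1}, $|V_i|\ge m-1$ for every $i$ with equality for at most one class. The key first step is a single-vertex-move observation: for every $v\in V_j$ and every $i\ne j$, the induced subgraph $H[V_i\cup\{v\}]$ must contain a copy of $G$ using $v$. Otherwise, relocating $v$ from $V_j$ to $V_i$ would produce a second valid $k$-$G$-free partition (the class $V_j\setminus\{v\}$ stays $G$-free as a subgraph of a $G$-free set, and by assumption $V_i\cup\{v\}$ would also be $G$-free), contradicting uniqueness.

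Two consequences follow directly. Since the copy of $G$ inside $V_i\cup\{v\}$ must involve $v$, it places $v$ in the role of some vertex $u\in V(G)$ of degree at least $\delta$, so $|N_H(v)\cap V_i|\ge\delta$. Moreover, the remaining $m-1$ vertices of the copy sit inside $V_i$ and induce a copy of $G-u$, giving $|E(H[V_i])|\ge e_2-\deg_G(u)\ge e_2-\delta$ whenever one can arrange the role $u$ to have minimum degree; this is achievable, for example, by selecting an external vertex $v$ with exactly $\delta$ neighbors in $V_i$, or by absorbing the loss into the cross-class count otherwise.

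With both facts in hand the edge total splits as $e_1=\sum_{i=1}^k |E(H[V_i])|+\sum_{i<j}|E(V_i,V_j)|$. The internal term is at least $k(e_2-\delta)$. For each unordered pair $(V_i,V_j)$, every vertex of $V_i$ sends $\ge\delta$ edges into $V_j$, so $|E(V_i,V_j)|\ge\delta\max\{|V_i|,|V_j|\}\ge m\delta$; the $\binom{k}{2}$ pairs contribute a baseline of $\binom{k}{2}m\delta$ cross edges. A further $km\delta$ is then extracted by summing the per-vertex external-degree bound $(k-1)\delta$ over the $|V(H)|\ge km-1$ vertices (each cross edge counted twice) and reconciling it with the per-pair count, yielding $\binom{k+1}{2}m\delta$ cross edges in total. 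Assembling, $e_1\ge k(e_2-\delta)+\tfrac{k(k+1)}{2}m\delta=k\bigl(e_2-\delta+\tfrac{k+1}{2}m\delta\bigr)$, as claimed.

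The main obstacle is precisely this final accounting, because the two external bounds (per pair $\ge m\delta$ and per vertex $\ge(k-1)\delta$) interact and naively adding them double-counts; landing exactly on $\binom{k+1}{2}m\delta$ requires a careful treatment of the at-most-one class of size $m-1$ and of which vertices realize the per-pair minimum. A secondary subtlety is arranging a minimum-degree role simultaneously inside each $V_i$; a cleaner route may be to prove $|E(H[V_i])|\ge e_2-\Delta(G)$ uniformly and to recover the gap $\Delta-\delta$ by sharpening either the per-pair cross-class bound (using $\deg_G(u)$ directly rather than $\delta$) or the per-vertex external-degree estimate.
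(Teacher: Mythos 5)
Your route is the same as the paper's: uniqueness forces, for every $v\in V_j$ and every $i\neq j$, a copy of $G$ through $v$ in $H[V_i\cup\{v\}]$; from this you get $|N_H(v)\cap V_i|\ge\delta$ and a lower bound on $|E(H[V_i])|$, and you then split $e_1$ into internal and cross-class edges. The problem is that you never actually complete the cross-class count. What your two estimates deliver is $\sum_{i<j}|E(V_i,V_j)|\ge\binom{k}{2}m\delta$ (per pair) or $\ge\tfrac{1}{2}(km-1)(k-1)\delta$ (per vertex, after halving the double count), and both are of order $\tfrac{k(k-1)}{2}m\delta$; the ``further $km\delta$'' you invoke to reach $\tfrac{k(k+1)}{2}m\delta$ is asserted, not derived, and you yourself flag it as the main obstacle. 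That is a genuine gap, not a bookkeeping technicality. It is worth knowing that the paper's own proof founders at exactly the same point: it bounds the cross edges by $m\delta\sum_{i=1}^{k-1}(k-i)$, which equals $\tfrac{k(k-1)}{2}m\delta$, and then silently writes $\tfrac{k(k+1)}{2}m\delta$ in the final display. So your suspicion is well placed: what both arguments actually establish is $e_1\ge k(e_2-\delta)+\tfrac{k(k-1)}{2}m\delta$, not the stated inequality.

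Your secondary worry is also legitimate and also unaddressed in the paper. The copy of $G$ in $H[V_i\cup\{v\}]$ uses $v$ in the role of some vertex $u\in V(G)$ with $\deg_G(u)\ge\delta$, so deleting $v$ only guarantees $e_2-\deg_G(u)\ge e_2-\Delta(G)$ edges inside $V_i$. Your proposed fix (pick an external $v$ with exactly $\delta$ neighbours in $V_i$, which forces the copy to use $v$ in a degree-$\delta$ role) works when such a $v$ exists, but nothing guarantees its existence; the paper simply asserts $|E(H[V_i])|\ge e_2-\delta$. If you want a clean, fully justified statement along these lines, prove $e_1\ge k\bigl(e_2-\Delta(G)\bigr)+\binom{k}{2}m\delta$ by your own argument, or find a genuinely new idea to recover the extra $km\delta$; neither your proposal nor the paper supplies one.
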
 
	\begin{proof}
		Suppose that $|V(H)|=n$, $|V(G)|=m$ and let $V_1,V_2,\ldots, V_k$ be a uniquely $k$-partition of $V(H)$ where $H[V_i]$ is a $G$-free class. As $H$ is u-k-G-f, one can say that for any $i\in \{1,2,\ldots,k\}$ and each $v\in V_i$, we have $G\subseteq H[V_j\cup \{v\}]$, for each $j\neq i$, that is  $|E(H[V_i])|\geq e_2-\delta$ and $|N_{V_j}(v)|\geq \delta$. Therefore it is easy to check that, $e_1=\sum_{i=1}^{i=k}|E(H[V_i])+\sum_{i=1}^{i=k-1}e_{i_j}$ where $e_{i_j}=\sum_{j=i+1}^{j=k}|E(H[V_i,V_j])$ for each $i\in \{1,2,\ldots,k\}$. It is easy to say that $e_{i_j}\geq (k-i)m\delta$ for each $i$. Hence, one can check that:
		\[
		e_1 \geq k(e_2-\delta)+\frac{k(k+1)}{2}(m\delta)=k(e_2-\delta+\frac{k+1}{2}(m\delta)).
		\]
		
		Which means that the proof is complete.
	\end{proof}
	In the next results  we give some   attribute  about  uniquely $k$-$G$-free colouring.  It is easy to check that  the following results is true.
	
	\begin{corollary} 
		Let $H$ and $G$ be two graph and $k\geq 1$ be a integers. Assume that $H$ is u-k-G-f  where $|V(G)|=m$, $\delta(G)=\delta$, hence we have:
		\begin{itemize}
			\item  For each $t\leq k-1$, the subgraph induced on the union of any $t$ colour-classes of the unique colouring is an uniquely $t$-$G$-free colourable graph.
			\item  Each vertex $v\in V(H)$ is adjacent with at least $\delta$ vertex in every colour class other
			than the colour class containing $v$, which means that in $H$, $\delta(H)\geq (k-1)\delta$.
		\end{itemize}
		
	\end{corollary}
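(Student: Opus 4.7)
The plan is to handle both items by swap arguments analogous to the one used in the proof of Theorem~\ref{t1}.

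For the first item, I would fix the unique $k$-$G$-free colouring $V_1,\ldots,V_k$ of $H$, any subset $I\subseteq[k]$ with $|I|=t\leq k-1$, and set $H'=H[\bigcup_{i\in I}V_i]$. The restriction of the original colouring to $H'$ is $G$-free, so $\chi_G(H')\leq t$; and if $\chi_G(H')<t$, combining such a colouring with the untouched classes $\{V_j\}_{j\notin I}$ yields a $G$-free colouring of $H$ using fewer than $k$ colours, contradicting $\chi_G(H)=k$. For uniqueness, any alternative $t$-$G$-free colouring of $H'$ would extend by the same untouched classes to a second $k$-$G$-free colouring of $H$ distinct from the original, contradicting the hypothesis that $H$ is uniquely $k$-$G$-free colourable.

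For the second item, I would assume for contradiction that some $v\in V_i$ satisfies $|N_H(v)\cap V_j|\leq\delta-1$ for some $j\neq i$, and swap $v$ into $V_j$: put $V_i'=V_i\setminus\{v\}$, $V_j'=V_j\cup\{v\}$, and $V_\ell'=V_\ell$ for the remaining indices. Then $H[V_i']$ is $G$-free as a subgraph of $H[V_i]$, while in $H[V_j']$ any copy of $G$ either avoids $v$ (and thus lies in the $G$-free $H[V_j]$) or contains $v$ with degree at least $\delta=\delta(G)$, contradicting $|N_H(v)\cap V_j|\leq\delta-1$. The resulting $k$-$G$-free colouring is distinct from the original, contradicting uniqueness; iterating over the $k-1$ indices $j\neq i$ then gives $\deg_H(v)\geq(k-1)\delta$, and hence $\delta(H)\geq(k-1)\delta$.

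The only subtle step, and the one where the hypothesis on $\delta(G)$ really bites, is the observation that if $v$ has fewer than $\delta(G)$ neighbours in $V_j$ then no copy of $G$ in $H[V_j\cup\{v\}]$ can pass through $v$; otherwise both arguments simply repackage uniqueness as an obstruction to the most obvious local rearrangement of the colouring.
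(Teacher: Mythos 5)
Your proof is correct, and it uses exactly the swap-and-recolour arguments that the paper relies on elsewhere (the paper itself states this corollary without proof as "easy to check", but your second item reproduces verbatim the argument used in the theorem immediately following it, and your first item mirrors the exchange technique of Theorem~\ref{t1}). No gaps: the key observation that a copy of $G$ through $v$ forces at least $\delta(G)$ neighbours of $v$ in the target class is stated and used correctly.
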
		
	
	\begin{theorem} 
		Let $H$ and $G$ be two graph and $k\geq 1$ be a integers. Assume that $H$ is u-k-G-f  where $|V(G)|=m$, $\delta(G)=\delta$, hence:
		
		If for a vertex $v$ of $H$ we have $\deg(v) = (k-1)\delta$ and the colour class of $v$ contains more than $m$ vertex, then $H\setminus\{v\}$ is also u-k-G-f.
	\end{theorem}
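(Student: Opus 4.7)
The plan is to exploit the tightness of the degree hypothesis together with the observation that a vertex having fewer than $\delta$ neighbors inside a color class cannot participate in any copy of $G$ sitting in that class. Fix the unique $k$-$G$-free partition $V_1,\ldots,V_k$ of $H$, with $v\in V_1$ and $|V_1|>m$.

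First, $\chi_G(H\setminus\{v\})\leq k$ is immediate from the truncated partition $V_1\setminus\{v\},V_2,\ldots,V_k$. For the reverse inequality I would argue by contradiction: assume $W_1,\ldots,W_{k-1}$ is a $(k-1)$-$G$-free coloring of $H\setminus\{v\}$. The pivotal claim is that if some $j$ satisfies $|N(v)\cap W_j|\leq\delta-1$, then $W_j\cup\{v\}$ remains $G$-free, because any copy of $G$ on this vertex set either avoids $v$ (impossible since $H[W_j]$ is $G$-free) or uses $v$, in which case $v$ must have at least $\delta(G)=\delta$ neighbors within the copy. Such a $W_j$ would produce a $(k-1)$-$G$-free coloring of $H$, contradicting $\chi_G(H)=k$. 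Hence $|N(v)\cap W_j|\geq\delta$ for every $j$, so summing gives $(k-1)\delta\leq\deg(v)=(k-1)\delta$, with equality forcing $|N(v)\cap W_j|=\delta$ for all $j$. In this situation I would place $v$ in a fresh singleton class $W_k=\{v\}$, yielding a $k$-$G$-free coloring of $H$. By uniqueness, $\{W_1,\ldots,W_{k-1},\{v\}\}=\{V_1,\ldots,V_k\}$, which forces $V_1=\{v\}$ and contradicts $|V_1|>m\geq 2$. Therefore $\chi_G(H\setminus\{v\})=k$.

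For uniqueness, let $W_1,\ldots,W_k$ be any $k$-$G$-free coloring of $H\setminus\{v\}$. Since $\sum_{j=1}^{k}|N(v)\cap W_j|=(k-1)\delta<k\delta$, some $W_j$ satisfies $|N(v)\cap W_j|\leq\delta-1$, and the same argument shows $W_j\cup\{v\}$ is $G$-free. This extends the coloring to a $k$-$G$-free coloring of $H$, which by uniqueness must equal $\{V_1,\ldots,V_k\}$. Since $v\in V_1$, we conclude $W_j=V_1\setminus\{v\}$, and the remaining classes, as a set, coincide with $\{V_2,\ldots,V_k\}$. Thus the $k$-$G$-free coloring of $H\setminus\{v\}$ is unique.

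The main obstacle is the degenerate branch of the existence argument in which a hypothetical $(k-1)$-coloring of $H\setminus\{v\}$ saturates each class with exactly $\delta$ neighbors of $v$, so $v$ cannot be absorbed into any existing class while preserving $G$-freeness. It is precisely the hypothesis $|V_1|>m$ that rescues this case: it forbids $v$ from being alone in its color class of $H$, ensuring the singleton trick $W_k=\{v\}$ produces an actual contradiction with the uniqueness of the coloring of $H$.
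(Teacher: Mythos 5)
Your proposal is correct, and it takes a genuinely different route from the paper's proof. The paper first pins down $|N(v)\cap V_i|=\delta$ for every class $V_i$ other than the one containing $v$, and then argues uniqueness of the colouring of $H\setminus\{v\}$ by assuming that any alternative $k$-$G$-free colouring arises from moving a \emph{single} vertex $u$ into the class of $v$; it then shows $uv\in E(H)$ and performs a swap of $u$ and $v$ to contradict the uniqueness of the colouring of $H$. Your argument replaces this with an absorption lemma: a class $W$ with $|N(v)\cap W|\leq\delta-1$ stays $G$-free after adding $v$, and since $\deg(v)=(k-1)\delta<k\delta$, pigeonhole guarantees such a class in \emph{every} $k$-$G$-free colouring of $H\setminus\{v\}$, so every such colouring lifts to a $k$-$G$-free colouring of $H$ and inherits its uniqueness. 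This buys two things the paper's write-up lacks. First, it handles arbitrary alternative colourings of $H\setminus\{v\}$, whereas the paper's reduction to a single-vertex move is asserted without justification and is the weak point of its proof. Second, you actually verify $\chi_G(H\setminus\{v\})\geq k$ (via the singleton-class trick, where the hypothesis $|V_1|>m$ rules out $V_1=\{v\}$); the paper only exhibits the truncated partition, i.e.\ only proves $\chi_G(H\setminus\{v\})\leq k$, and never excludes a $(k-1)$-colouring. One small caveat, shared with the paper: both arguments implicitly assume $\delta(G)\geq 1$ (the pigeonhole step $(k-1)\delta<k\delta$ and the claim that a vertex with fewer than $\delta$ neighbours in a class cannot lie in a copy of $G$ both fail if $G$ has an isolated vertex); it would be worth stating this hypothesis explicitly.
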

	
	\begin{proof}
		Suppose that $H$ be a u-k-G-f, and assume that $V_1$, $V_2, \ldots, V_k$ be the partition of $V(H)$, so that $H[V_i]$ is $G$-free, $|V_1|=m+1$ and there exist a vertex of $V_1$ say $v$ so that $\deg(v) = (k-1)\delta$. As $H$ be a u-k-G-f, one can say that $|N(v)\cap V_i|=\delta$ for each $i\in \{2,3,\ldots,k\}$. Otherwise if there exist at least one $i$, so that $|N(v)\cap V_i|\leq \delta-1$, then one can say that $H[V_i\cup\{v\}]$ is  a $G$-free class, hence set $V'_1=V_1\setminus\{v\}$, $V'_i=V_i\cup \{v\}$ and $V'_j=V_j$ for each $j\in [k], j\neq 1,i$. Therefore, for each $i\in[k]$, $H[V'_i]$  is $G$-free, a contradiction. Now considering the following claim:

		\begin{Claim}\label{c0}
			$H'=H\setminus\{v\}$ is u-k-G-f.
		\end{Claim}	 
		\begin{proof} 
			As $|V_1|\geq m+1$, and  $H$ be a u-k-G-f, one can say that $H'=H\setminus\{v\}$ is $k$-$G$-free colourable, by considering  $V_1\setminus\{v\}$, $V_2, \ldots, V_k$.  Now, by contrary suppose that the coloring provided $H'$  is not unique. Therefore as $H$ be a u-k-G-f and the coloring provided $H'$  is not unique, then there exist a vertex $u$ of some $V_i(i\geq 2)$ say $u\in V_i$ so that $V'_1=V_1\cup \{u\}$, $V'_i=V_i\setminus\{u\}$ and $V'_j=V_j$ for each $j\geq2$ and $j\neq i$ be a new $k$-$G$-free  color class of $V(H')$. W.l.g we may suppose that $i=2$.  As $H$ be a u-k-G-f, one can say that  $uv\in E(H)$. Now as $uuv\in E(H)$ and $|N(v)\cap V_i|=\delta$ for each $i\in \{2,3,\ldots,k\}$, one can check that $V'_1=(V_1\cup \{u\})\setminus \{v\}$, $V'_2=(V_2\setminus\{u\})\cup \{v\}$ and $V'_j=V_j$ for each $j\geq3$ be a new $k$-$G$-free  color class of $V(H)$, a contradiction.
		\end{proof}
		Hence, by Claim \ref{c0} the proof is complete.
	\end{proof}
	\section{$G$-free minimal}	
	\begin{definition}
		A graph $H$ is vertex-minimal with respect to $G$-free, or  $G$-free-vertex-minimal for short, if  for every  vertex of $V(H)$, $\chi_G(H\setminus\{v\})\leq \chi_G(H)-1$.
	\end{definition}
	
	In this section we investigate some properties of  $G$-free-vertex-minimal (G-f-v-m)  coloring of graphs and give some  lower bounds on the vertices of G-f-v-m graphs $H$. The following theorem establishes a lower bound for the size of G-f-v-m coloring of $H$.
	\begin{theorem}\label{t3}
		Let $H$ and $G$ be two graph and $k\geq 1$ be a integers. Assume that $H$ is k-G-f-v-m graphs, then:
		\[|V(H)|\geq (k-1) (|V(G)|-1) +1.\]
	\end{theorem}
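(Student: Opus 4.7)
The plan is to exploit the criticality condition directly: picking any vertex and using the fact that $H$ minus that vertex is properly colorable with $k-1$ colors, then arguing that $v$ forces each of those $k-1$ classes to be large.

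First I would fix an arbitrary vertex $v\in V(H)$. Since $H$ is $k$-$G$-free-vertex-minimal, the definition gives $\chi_G(H\setminus\{v\})\leq k-1$, so there exists a $(k-1)$-$G$-free coloring $V_1,V_2,\ldots,V_{k-1}$ of $H\setminus\{v\}$. Next I would observe that this coloring cannot be extended to a $(k-1)$-$G$-free coloring of $H$ itself, since $\chi_G(H)=k$. Therefore, for every $i\in[k-1]$, placing $v$ into $V_i$ would destroy the $G$-free property, i.e.\ the induced subgraph $H[V_i\cup\{v\}]$ contains a copy of $G$.

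The key observation is then that this copy of $G$ must actually use the vertex $v$: otherwise the forbidden copy of $G$ would already lie in $H[V_i]$, contradicting that $V_1,\ldots,V_{k-1}$ is a $G$-free coloring of $H\setminus\{v\}$. Hence the remaining $m-1$ vertices of the copy lie entirely in $V_i$, giving $|V_i|\geq m-1$ for every $i\in[k-1]$, where $m=|V(G)|$.

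Finally I would sum these lower bounds across all color classes to conclude
\[
|V(H)|=1+\sum_{i=1}^{k-1}|V_i|\geq 1+(k-1)(m-1),
\]
which is exactly the claimed inequality. The argument is completely parallel to the lower bound proof for uniquely $k$-$G$-free coloring in Theorem~\ref{t1}, but it is simpler because we only need the extendability obstruction at a single vertex rather than a swapping argument across two classes. I anticipate no real obstacle; the only delicate point is to be sure that the forbidden $G$-subgraph in $H[V_i\cup\{v\}]$ indeed contains $v$, which is why the step isolating that copy of $G$ and invoking the $G$-freeness of $H[V_i]$ is where the argument really lives.
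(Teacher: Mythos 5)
Your proof is correct, and it reaches the bound by the same underlying counting step as the paper (a distinguished vertex $v$ that cannot be absorbed into any of $k-1$ $G$-free classes forces each class to contain at least $|V(G)|-1$ vertices), but the way you obtain the decomposition is genuinely different. You invoke the vertex-minimality hypothesis directly: delete an arbitrary $v$, use $\chi_G(H\setminus\{v\})\le k-1$ to get a $(k-1)$-coloring, and observe that non-extendability (from $\chi_G(H)=k$) forces a copy of $G$ through $v$ in each $H[V_i\cup\{v\}]$; your care in checking that the copy actually contains $v$ is exactly the right point to dwell on. The paper instead builds a greedy decomposition $V_1,\dots,V_k$ in which each $V_i$ is a \emph{maximal} $G$-free set in what remains, takes $v\in V_k$ (nonempty because $\chi_G(H)=k$), and uses maximality of each earlier $V_i$ to force the copy of $G$ in $H[V_i\cup\{v\}]$. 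The paper's route never uses vertex-minimality at all, so it actually establishes the stronger fact that $|V(H)|\ge (k-1)(|V(G)|-1)+1$ for \emph{every} graph $H$ with $\chi_G(H)=k$; your route is a clean and slightly more economical argument for the stated theorem, but it is tied to the minimality hypothesis and does not yield that generalization.
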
 
	\begin{proof}
		Suppose that $|V(H)|=n$, $|V(G)|=m$ and let $V_1,V_2,\ldots, V_k$  denote the partite sets of  $V(H)$, so that $H[V_i]$ for each $i\in[k]$ is  $G$-free and $H[V_i]$ is a maximal $G$-free class of $H\setminus (\cup^{j=i-1}_{j=1} V_j)$ for each $i=i,\ldots,k$. As $\chi_{G}(H)=k$, we can say that $|V_k|\geq 1$. Assume that $v\in V_k$. Therefore by maximality $V_i$, one can say that $H[V_i\cup \{v\}]$ for each $i\leq k-1$, contain a copy of $G$, that is $|V_i|\geq m-1$ for each $1\leq i\leq k-1$.    Hence:
		\[|V(H)|\geq (k-1) (|V(G)|-1) +1.\]
	\end{proof}
	By Theorem  \ref{t3} it is easy to say that the following results is true.
	\begin{theorem}\label{t4}
		Let $H$ and $G$ be two graphs where $|V(H)|= (k-1) (|V(G)|-1) +1$ for some $k\geq 2$. If there exist a subsets of $V(H)$ say $S$, where $|S|\geq |V(G)|$, then:
		\[\chi_G(H)\leq k-1.\]
	\end{theorem}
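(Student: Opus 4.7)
The plan is to exhibit an explicit $(k-1)$-$G$-free coloring of $H$, relying on the subset $S$ provided by the hypothesis as a single, oversized color class. Before doing so, I would flag that the hypothesis should really be read as requiring that $H[S]$ itself is $G$-free; otherwise the conclusion is false (for instance, $H=K_n$ with $n=(k-1)(m-1)+1$ and $G=K_m$ cannot be partitioned into $k-1$ classes each of size at most $m-1$, by a direct pigeonhole count, so the statement without the $G$-free condition on $S$ would fail here). Under the natural reading that $H[S]$ is $G$-free and $|S|\ge m := |V(G)|$, I would take $S$ as the first color class.

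Next I would bound the leftover. After removing $S$, the remaining vertex set has cardinality at most
\[
(k-1)(m-1)+1-|S| \le (k-1)(m-1)+1-m = (k-2)(m-1).
\]
I would then partition $V(H)\setminus S$ arbitrarily into $k-2$ blocks of size at most $m-1$ each; this is possible exactly because the total size is bounded by $(k-2)(m-1)$. Since any induced subgraph on fewer than $m$ vertices trivially fails to contain a copy of $G$, every such block is automatically a $G$-free color class, in parallel to the way maximality was exploited in the proof of Theorem~\ref{t3}.

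Combining the block $S$ with the $k-2$ small blocks produces a partition of $V(H)$ into $k-1$ color classes on each of which $H$ induces a $G$-free subgraph, giving $\chi_G(H)\le k-1$ as required. The only genuine obstacle I foresee is pinning down the correct interpretation of the hypothesis on $S$; once we insist that $H[S]$ be $G$-free, the rest is an elementary pigeonhole computation and no additional machinery is needed.
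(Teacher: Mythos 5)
Your proof is correct, and it is worth noting that the paper itself offers no argument for this statement at all: it is asserted with the remark that it follows ``by Theorem~\ref{t3},'' whose proof uses a greedy maximal decomposition. Your argument --- take $S$ as one oversized $G$-free class, observe that the leftover has at most $(k-1)(m-1)+1-m=(k-2)(m-1)$ vertices, and split it into $k-2$ blocks of size at most $m-1$, each trivially $G$-free --- is exactly the pigeonhole computation that the paper's citation of Theorem~\ref{t3} presumably gestures at, so you have filled in the missing proof rather than diverged from it. More importantly, your preliminary correction of the hypothesis is not pedantry but essential: as literally stated, a subset $S$ with $|S|\geq |V(G)|$ exists whenever $|V(H)|\geq |V(G)|$, and your counterexample $H=K_{(k-1)(m-1)+1}$, $G=K_m$ (where $\chi_G(H)=k$) shows the literal statement is false; indeed the paper itself immediately afterwards treats $K_{(k-1)(m-1)+1}$ as a $k$-$G$-free-minimal graph, which would contradict the theorem under the literal reading. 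With the hypothesis amended to require $H[S]$ to be $G$-free, your argument is complete, including the boundary case $k=2$ where the leftover is empty.
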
 
	By Theorem \ref{t4}, one can say that $K_{(k-1)(|G|-1)+1}$ be a $k$-$G$-free graphs, for each $G$.
	We shall now construct k-G-f-v-m graphs with order $(k-1) (|V(G)|-1) +1$. Suppose that for $i=1,2,\ldots k-1$,  $H_i$ denote a $K_{(m-2)}$ and $H_k\cong K_{k}$, where $k\leq m-1$. Set $H_1=\oplus_{i-1}^{i=k-1} H_i\cong K_{(k-1)(m-2)}$ and suppose that $V(H_k)=\{v_1,v_2, \ldots ,v_k\}$. Let for each  $i\in \{1,2,\ldots, k-1\}$ denote a $H'_i=H_i\cup \{v_i\}$, where $v_i$ is adjacent to $m-3$ vertices of $V(H_i)$ and $H'_k=\{v_k\}$, say $v_iw^i_{m-2}\notin E(H')$, where $w^i_{m-2}$ be a vertex of $H_i$ for each $i\in \{1,2,\ldots, k-1\}$. Now, set $H^*=\oplus_{i-1}^{i=k} H'_i$. Not that $|V(H^*)|=(k-1)(m-1)+1$ and $\delta(H^*)=(k-1)(m-1)-1$. Assume that  $\F$ denote a family of connected subgraph of $K_m$  with $m$ vertices and minus the
	edges of a $tK_2$ where $t=\frac{m}{2}$. Now, we have the following theorems:
	\begin{theorem}\label{t5}
		For each $G\in \F$,  $H^*$ is $k$-$G$-free-vertex-minimal. 
	\end{theorem}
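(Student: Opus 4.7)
The plan is first to identify $H^*$ explicitly. Unwinding the construction, $H^*$ is the complete graph $K_N$ on $N=(k-1)(m-1)+1$ vertices with the $k-1$ edges $v_1 w^1_{m-2},\ldots,v_{k-1} w^{k-1}_{m-2}$ removed. These deleted edges have pairwise distinct endpoints, so they form a matching $M^*$ of size $k-1$. Writing $H^*=K_N-M^*$ reduces the theorem to two claims: (a) $\chi_G(H^*)=k$, and (b) for every $v\in V(H^*)$, $\chi_G(H^*-v)\le k-1$.

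For the upper bound in (a) and for (b), only the vertex count matters. I would partition $V(H^*)$ as $V(H'_1),\ldots,V(H'_{k-1}),\{v_k\}$, giving $k$ classes each of size at most $m-1<m=|V(G)|$, so each class is trivially $G$-free. Similarly, $|V(H^*-v)|=(k-1)(m-1)$ admits a balanced $(k-1)$-partition into classes of size $m-1$, each too small to contain $G$; this proves (b).

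The interesting direction is $\chi_G(H^*)\ge k$, which I would argue by contradiction. Suppose $W_1,\ldots,W_{k-1}$ is a $(k-1)$-$G$-free coloring; pigeonhole forces some $W_j$ with $|W_j|\ge m$. I then plan to produce an explicit copy of $G$ inside $H^*[W_j]$. For any $U\subseteq W_j$ with $|U|=m$, the induced subgraph $H^*[U]$ equals $K_m$ minus the sub-matching $S\subseteq M^*$ of those edges both of whose endpoints lie in $U$. Since $G$ is isomorphic to $K_m$ minus a perfect matching, a copy of $G$ embeds in $H^*[U]$ whenever $|S|\le m/2$: simply extend $S$ to a perfect matching $M'$ of the complete graph on $U$, giving $G\cong K_m-M'\subseteq K_m-S=H^*[U]$. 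Hence it suffices to find one such $U$.

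The main obstacle is the bound $|S|\le m/2$. Partition the vertices of $W_j$ into $a$ full $M^*$-pairs together with $r$ singleton-type vertices (those whose $M^*$-partner, if any, lies outside $W_j$), so that $|W_j|=2a+r$. A short counting argument shows the minimum of $|S|$ over choices of $U$ is $\max(0,m-a-r)$. Combined with $2a+r\ge m$ this yields $a+r\ge m-a$, so in the regime $a\le m/2$ we get $a+r\ge m/2$ directly, while in the regime $a>m/2$ we have $a+r\ge a>m/2$; in either case $|S|\le m/2$. This forces $G\subseteq H^*[W_j]$, contradicting the $G$-freeness of $W_j$ and establishing $\chi_G(H^*)\ge k$.
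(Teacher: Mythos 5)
Your proof is correct and rests on the same structural observation as the paper's: $H^*$ is $K_{(k-1)(m-1)+1}$ minus a matching $M^*$ of size $k-1$, the upper bound $\chi_G(H^*)\le k$ and the vertex-minimality both follow from partitions into classes of fewer than $m$ vertices, and the lower bound reduces to exhibiting a copy of $G$ inside any sufficiently large colour class. Where you genuinely diverge is in how you certify that copy. The paper proves the stronger Claim that \emph{every} $m$-subset $S$ of $V(H^*)$ satisfies $G\subseteq H^*[S]$, via a rather laboured analysis of $\omega(H^*[S])$ and the minimum degree of $H^*$; you instead hunt for a well-chosen $m$-subset $U$ of the big class $W_j$ and bound the number of deleted edges inside $U$ by the $(a,r)$-counting argument. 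That optimization is unnecessary: for \emph{any} $m$-subset $U$, the set $S$ of $M^*$-edges with both ends in $U$ is a matching on the $m$ vertices of $U$, so $|S|\le m/2$ automatically, and your extension of $S$ to a perfect matching $M'$ of $U$ already yields $G\subseteq K_m-M'\subseteq K_m-S=H^*[U]$. Dropping the counting step thus recovers exactly the paper's Claim with a cleaner proof than the paper's own; what your matching-extension argument buys is a transparent replacement for the paper's opaque clique-number case analysis. One cosmetic point: since the definition of $\F$ only guarantees $G\subseteq K_m\setminus\frac{m}{2}K_2$ (a connected subgraph, not necessarily the whole cocktail-party graph), you should write $G\subseteq K_m-M'$ rather than $G\cong K_m-M'$; the containment chain, and hence the proof, is unaffected.
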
 
	\begin{proof}
		Since  $k\leq m-1$, we can say that  $H_i$ is $G$-free, that is  $\chi_G(H)\leq k$. As $H_1\cong K_{(k-1)(m-2)}\subseteq H^*$,   $|\omega(H^*)|\geq (k-1)(m-2)$ and by denote $H^*$, we can check that $|\omega(H^*)|= (k-1)(m-2)+1$. W.l.g, suppose that $V'=\{x_1,\ldots,x_{(k-1)(m-2)} \}\subseteq V(H^*)$, where $H^*[V']\cong K_{(k-1)(m-2)}$. Now, we have the claims as follow:
		
		\begin{Claim}\label{cl5} For any $S$ subsets of $V(H^*)$ with  $m$ member, $G\subseteq  H^*[S]$.
		\end{Claim}
		\begin{proof} Suppose that $S=\{s_1,\ldots,s_{m} \}$.  Consider $|S\cap V'|$. If $|S\cap V'|\geq m-1$, then one can say that $K_{m}\setminus e\subseteq  H^*[S]$, therefore   $G\subseteq  H^*[S]$. Now, suppose that $|S\cap V'|\leq m-2$.  Assume that $|\omega(H^*[S])|= t$, and w.l.g suppose that $S'=\{s_1,\ldots,s_{t} \}\subseteq S$, where $H^*[S']\cong K_{t}$. Consider $\overline{S'}=S\setminus S'$.  As  $|\omega(H^*[S])|= t$, we can say that $|N_H(s)\cap S'|\leq t-1$ for each $s\in \overline{S'}=\{s_{t+1},\ldots,s_{m} \}$. As, $\delta(H^*)=(k-1)(m-1)-1$ and $H^*[S]\subseteq H^*$, we can say that $|N_H(s)\cap S'|= t-1$ for each $s\in S'$ and $H^*[\overline{S'}]\cong K_{m-t}$, otherwise we can fined a vertex of $S$ say $w$, so that $\deg_{H^*}(w)\leq (k-1)(m-1)-2$, a contradiction. Since $|\omega(H^*[S])|= t$ and $H^*[\overline{S'}]\cong K_{m-t}$, $t\geq m-t$, that is $m\leq 2t$. Therefore we can check that  $H^*[S]\cong (K_m\setminus \frac{m-t}{2}K_2)$. Now, as $G\in \F$ and $t\geq \frac{m}{2}$, we have $G\subseteq K_m\setminus \frac{m}{2}K_2\subseteq H^*[S]$.
		\end{proof}	
		
		Therefore, as $|V(H^*)|=(k-1)(m-1)+1$ and by Claim \ref{cl5} we can say that $\chi_G(H)= k$. Assume that  $v$ be any vertex of $V(H^*)$, set $H''=H^*\setminus\{v\}$. As $|V(H'')|=(k-1)(m-1)$, and $|G|=m$, we can decomposition of $V(H'')$ into $k-1$ class, where each class have  $m-1$ member, that is $\chi_G(H'')\leq k-1$, which means that, $H^*$ is k-G-f-v-m, and the proof of theorem is complete.
	\end{proof}	
	As, $H^*$ is k-G-f-v-m, it is easy to say that, for each graps $G$ with $m$ vertices and  each subgraphs $H$ of $K_{(k-1)(m-1)+1}$, such that $H^*\subseteq H$, we have  	$H$ is k-G-f-v-m.
	
	
	\begin{definition}
		A graph $H$ is minimal with respect to $G$-free, or  $G$-free-minimal, if  for every  edges of $E(H)$ we have $\chi_G(H\setminus\{e\})= \chi_G(H)-1$.
	\end{definition}
	
	In this following results we investigate some properties of $G$-free-minimal  coloring of graphs and give a lower bounds on the vertices of $G$-free-minimal graphs $H$. By argument similar to the proof of Theorem \ref{t3} in the following theorem establishes a lower bound for the size of $G$-free-minimal coloring of $H$.
	\begin{theorem}\label{t6}
		Let $H$ and $G$ be two graph and $k\geq 1$ be a integers. Suppose that $H$ is $k$-$G$-free-minimal graphs, then:
		\[|V(H)|\geq (k-1) (|V(G)|-1) +1.\]
	\end{theorem}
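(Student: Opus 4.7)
The plan is to mirror the argument already used for Theorem \ref{t3}, since the only feature of the hypothesis actually needed for this lower bound is that $\chi_G(H)=k$ — and this is part of the statement that $H$ is $k$-$G$-free-minimal. The edge-minimality condition itself does not enter the vertex-count argument directly; it only supplies the value of the chromatic number.

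First I would fix notation: let $|V(H)|=n$ and $|V(G)|=m$. Then I would construct a specific $G$-free $k$-partition $V_1,V_2,\ldots,V_k$ of $V(H)$ greedily, exactly as in the proof of Theorem \ref{t3}: take $V_1$ to be a maximal $G$-free induced subgraph of $H$, take $V_2$ to be a maximal $G$-free induced subgraph of $H\setminus V_1$, and in general take $V_i$ to be a maximal $G$-free induced subgraph of $H\setminus\bigl(\bigcup_{j=1}^{i-1}V_j\bigr)$. Since $\chi_G(H)\leq k$, such a partition into at most $k$ classes exists.

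Next, because $\chi_G(H)=k$, the last class cannot be empty, so I choose $v\in V_k$. By the maximality built into the construction, for each $i\leq k-1$ the set $V_i\cup\{v\}$ is no longer $G$-free in $H$, so $H[V_i\cup\{v\}]$ contains a copy of $G$. Since this copy uses $v$ together with $m-1$ vertices of $V_i$, we obtain $|V_i|\geq m-1$ for every $i\in\{1,\ldots,k-1\}$. Summing the class sizes yields
\[
|V(H)|=\sum_{i=1}^{k}|V_i|\geq (k-1)(m-1)+1,
\]
which is exactly the claimed bound.

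The main subtlety to watch for is simply to make the greedy/maximal choice of the partition explicit, so that the step ``$H[V_i\cup\{v\}]$ contains $G$'' is justified; if one only takes an arbitrary optimal $G$-free coloring, some class might be small and the argument stalls. There is no real obstacle here beyond this bookkeeping, and the edge-minimality hypothesis is used only to license the assumption $\chi_G(H)=k$.
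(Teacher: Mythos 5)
Your proposal is correct and follows exactly the route the paper intends: the paper gives no separate proof of this theorem but states it follows ``by argument similar to the proof of Theorem \ref{t3}'', which is precisely the greedy maximal-class partition argument you reproduce, using only that $\chi_G(H)=k$. Your explicit remark that the classes must be chosen maximal (not just any optimal coloring) is a correct and worthwhile clarification of a point the paper leaves implicit.
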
 
	By Theorem \ref{t4}, one can say that $K_{(k-1)(m-1)+1}$ be a $k$-$G$-free-minimal graphs, for $G=K_{m}$.
	We shall now construct $k$-$G$-free-minimal graphs with order $(k-1) (|V(G)|-1) +1$ for some graph $G$. Suppose that $\R$  denote a subgraphs of $K_{(k-1)(m-1)+1}$ minus the edges of a $tK_2$, and $G$  denote a subgraphs of $K_{m}$ minus the edges of a $tK_2$, where $t\leq \frac{m}{2}-1$. Now, we have the following theorems:
	\begin{theorem}\label{t5}
		$\R$  is $k$-$G$-free-minimal.
	\end{theorem}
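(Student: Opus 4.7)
The plan is to verify both parts of the $k$-$G$-free-minimal definition: (i) $\chi_G(\R) = k$, and (ii) for every $e \in E(\R)$, $\chi_G(\R \setminus \{e\}) \leq k - 1$. The central tool is the following embedding criterion for graphs obtained from a complete graph by deleting a matching: if $H$ has $m$ vertices and non-edges forming a matching of size $s$, then $G \subseteq H$ if and only if $s \leq t$; one can pick any injection of the $s$-matching of $H$ into the $t$-matching of non-edges of $G$ and extend arbitrarily on the remaining vertices. Conversely, if the non-edges of $H$ do not form a matching, or if more than $t$ of them are present, then $G \not\subseteq H$, because any bijection realizing the embedding would have to send the non-edges of $H$ back into the $t$-matching of non-edges of $G$. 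The hypothesis $t \leq m/2 - 1$ ensures, in particular, that a $(t+1)$-matching still fits inside $K_m$.

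For the upper bound $\chi_G(\R) \leq k$, I will partition $V(\R)$ into $k - 1$ classes of size $m - 1$ together with one singleton; every class has fewer than $m$ vertices and is hence trivially $G$-free. For the lower bound $\chi_G(\R) \geq k$, I argue by contradiction: a putative $(k-1)$-coloring of the $(k-1)(m-1) + 1$ vertices yields, by pigeonhole, a class $S$ with $|S| \geq m$, and $\R[S]$ is $K_{|S|}$ minus a sub-matching of the original $tK_2$. Applying the embedding criterion to any $m$-subset $T \subseteq S$ gives $G \subseteq \R[T] \subseteq \R[S]$, contradicting $G$-freeness of that class.

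For the minimality, I fix $e \in E(\R)$ and set $\R' = \R \setminus \{e\}$; its non-edges are the original $t$-matching $M$ together with $e$. In Case A, where $e$ is vertex-disjoint from $M$, the non-edges of $\R'$ form a $(t+1)$-matching using $2(t+1) \leq m$ vertices. I place all these endpoints into a class $S_1$ of size $m$, padded with other vertices of $V(\R')$; then $\R'[S_1] = K_m \setminus (t+1)K_2$, and the criterion forbids $G \subseteq \R'[S_1]$ since $t+1 > t$. In Case B, where $e$ shares an endpoint with some $f \in M$, the non-edges of $\R'$ consist of a path $P_3$ together with $t - 1$ isolated matching edges, using $2t + 1 \leq m$ vertices; I again place all these endpoints inside a size-$m$ class $S_1$, and because the non-edges of $\R'[S_1]$ fail to form a matching, $G \not\subseteq \R'[S_1]$. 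In either case the remaining $(k-2)(m-1)$ vertices split into $k - 2$ classes of size $m - 1$, each trivially $G$-free, yielding a $(k-1)$-$G$-free coloring of $\R'$.

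The most delicate step will be the non-containment verification in Case B: any bijection $\phi: V(G) \to S_1$ realizing $G \subseteq \R'[S_1]$ would carry the two non-edges of $\R'[S_1]$ sharing a vertex back to two non-edges of $G$ sharing a vertex, contradicting that the non-edges of $G$ form a matching. The arithmetic hypothesis $t \leq m/2 - 1$ is exactly what provides enough room to pack all non-edge endpoints into a single size-$m$ color class in both cases, so it is the point where the proof would break if one tried to push the construction beyond this range.
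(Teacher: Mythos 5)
Your proof is correct, and its overall strategy coincides with the paper's: establish $\chi_G(\R)=k$ by showing every $m$-subset of $V(\R)$ contains $G$, and then, for each deleted edge $e$, exhibit a single $G$-free colour class of size exactly $m$ that absorbs all endpoints of non-edges of $\R\setminus e$, leaving $(k-2)(m-1)$ vertices to be split into $k-2$ trivially $G$-free classes. Where you differ is in the supporting details, and your version is the tighter one. Your embedding criterion --- $G=K_m\setminus tK_2$ embeds into an $m$-vertex graph $H$ if and only if the non-edges of $H$ form a matching of size at most $t$, since any embedding is a bijection pulling non-edges of $H$ back to non-edges of $G$ --- is a cleaner and more transparent tool than the paper's argument via clique numbers in Claim 3.12/3.16. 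Your two-case split (either $e$ is disjoint from the deleted matching $M$, or it meets $M$) is exhaustive, whereas the paper's three cases by membership in $V_1,V_2,V_3$ omit the mixed case where one endpoint of $e$ lies in $V_1\cup V_2$ and the other in $V_3$; and your explicit padding of the special class to size exactly $m$ corrects the paper's erroneous assertion that $|V_1\cup V_2|\geq m$ (in fact $|V_1\cup V_2|=2t\leq m-2$). Two small polish points: in your Case B you should note the subcase where $e$ joins endpoints of two distinct matching edges (the non-edges then form a $P_4$ plus $(t-2)K_2$, and your ``two non-edges share a vertex'' argument still applies verbatim); and you should record the standard observation that $\chi_G(H\setminus e)\geq\chi_G(H)-1$ always holds, so that your bound $\chi_G(\R\setminus e)\leq k-1$ indeed yields the equality demanded by the definition of $G$-free-minimal.
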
 
	\begin{proof}
		As  $|\R|=(k-1)(m-1)+1$, we can say that $\chi_G(\R)\leq k$. As, $\R$ denote a subgraphs of $K_{(k-1)(m-1)+1}$ minus the edges of $tK_2$, assume that $V(\R)=\{v_1, v_2, \ldots, v_{(k-1)(m-1)+1}\}$ and w.l.g suppose that $E(tK_2)=\{e_1, e_2, \ldots, e_t\}$, where for each $i\in \{1,2,\ldots,t\}$, $e_i=v_iv_{\frac{m}{2}+i}$. Set $V_1=\{v_1, \ldots, v_t\}$,  $V_2=\{v_{\frac{m}{2}+1},\ldots, v_{\frac{m}{2}+t}\}$ and $V_3=V(\R)\setminus (V_1\cup V_2)$. Now, to prove $\chi_G(\R)=k$, by Claim \ref{cl5} we can check that the following claims is true:

		\begin{Claim}\label{cl6} For any $S$ subsets of $V(\R)$ with  $m$ member, $G\subseteq  \R[S]$.
		\end{Claim}
		Therefore, as $|\R|=(k-1)(m-1)+1$  by Claim \ref{cl6} we can say that $\chi_G(\R)=k$. Now, as $|V(\R)|=(k-1)(m-1)+1$, assume that  $v$ be any vertex of $V(\R)$, set $\R'=\R\setminus\{v\}$. As $|V(\R')|=(k-1)(m-1)$, and $|G|=m$, we can decomposition of $V(\R')$ into $k-1$ class, where each class have  $m-1$ member, that is $\chi_G(\R')= k-1$, which means that, $\R$ is $k$-$G$-free-vertex-minimal. Hence, to prove  $\R$ is $k$-$G$-free-minimal, we most show that for any edges of $E(\R)$, say $e$,  $\chi_G(\R\setminus e)= k-1$.  Now, to prove  $\chi_G(\R\setminus e)= k-1$, we have the following claim:

		\begin{Claim}\label{cl7} For any edges of $E(\R)$, say $e$,  $\chi_G(\R\setminus e)= k-1$.
		\end{Claim}	
		\begin{proof}
			Suppose that $e=vv'\in E(\R)$, now by considering $v$ and $v'$, we have three cases as follow:
			
			{\bf Case 1:} $v,v'\in V_i$ for one $i, i\in \{1,2\}$. W.l.g we may suppose that $v,v'\in V_1$. Set $S=V_1\cup V_2$, one can say that $|S|\geq m$, Now, as $v,v'\in S$ and $vv'\in E(\R)$, we can check that  $|N(x)\cap S|\leq d-3$, for each $x\in \{v,v'\}$, therefore $\R[S]$ is $G$-free. Now as, $|V(H)|=(k-1)(m-1)+1$, $|V(G)|=m$ and $|S|\geq m$, one can say that $\chi_G(\R\setminus e)= k-1$.
			
			{\bf Case 2:} $v\in V_i$  and $v'\in V_j$, where $i\neq j$ and  $i, j\in \{1,2\}$. The proof is same as Case 1.
			
			{\bf Case 1:} $v,v\in V_3$. Set $S=V_1\cup V_2\cup \{v,v'\}$, one can say that $|S|\geq m$, Now, as $v,v'\in S$ and $vv'\in E(\R)$, we can check that  $\R[S]\cong K_m\setminus \frac{m}{2}K_2$, therefore as  $t\leq \frac{m}{2}-1$, $\R[S]$ is $G$-free. Now as, $|V(H)|=(k-1)(m-1)+1$, $|V(G)|=m$ and $|S|\geq m$, one can say that $\chi_G(\R\setminus e)= k-1$.
			
			Therefore by Cases 1,2,3 we have the claim is true, which means that the proof is complete.		
		\end{proof}	
	\end{proof}
	
	Suppose that $H$ is a connected subgraphs of $K_{(k-1)(m-1)+1}$, and $G$  denote a subgraphs of $K_{m}$ minus the edges of a $ \frac{m}{2}K_2$, where  $m$ is even and $\chi_G(H)=k$. Now, we have the following theorems:
	\begin{theorem}\label{t5}
		$H$ is $k$-$G$-free-minimal iff either $H$ is a graph $K_{(k-1)(m-1)+1}$ minus the edges of a $1$-factor, when $k$ is even, or $H\cong H_1\oplus K_1$, where $H_1$ is a graph $K_{(k-1)(m-1)}$ minus the edges of a $1$-factor, when $k$ is odd.
	\end{theorem}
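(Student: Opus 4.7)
The plan is to prove both directions of the biconditional by exploiting the following observation about $G$: since $G\subseteq K_m\setminus\tfrac{m}{2}K_2$, an $m$-vertex graph $F$ contains $G$ if and only if its set of missing edges (inside $K_m$) forms a matching of size at most $m/2$. Equivalently, $F$ fails to contain $G$ exactly when two of its missing edges share an endpoint. This dictionary between $G$-containment and the structure of missing edges, combined with the size bound of Theorem~\ref{t6} and the $m$-subset analysis of Claim~\ref{cl6}, drives the entire argument. Throughout, set $n := (k-1)(m-1)+1$ and note that since $m-1$ is odd, $n$ is even if and only if $k$ is even.

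For the ``if'' direction, assume $H$ has the claimed form and let $M$ denote the $1$-factor whose removal defines it: a perfect matching on $V(H)$ when $k$ is even, and a perfect matching on $V(H_1)$ (leaving the apex unsaturated) when $k$ is odd. First I verify that $\chi_G(H) = k$. The upper bound is the explicit coloring with $k-1$ classes of size $m-1$ plus one singleton (the apex in the odd case); the lower bound follows because in any $m$-vertex subset of $V(H)$ the missing edges sit inside $M$ and hence form a matching, so by Claim~\ref{cl6} every such $m$-subset already contains $G$, forcing each color class to have size at most $m-1$ and $\chi_G(H)\geq\lceil n/(m-1)\rceil = k$. For minimality, pick any $e = uv\in E(H)$ and let $u', v'\in V(H)$ be the partners of $u, v$ in $M$ (distinct from $u, v$ since $e\notin M$). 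I place $\{u, v, u', v'\}$ together in a class $V_1$ of size $m$, padded with $m-4$ other vertices, and partition the remaining $(k-2)(m-1)$ vertices into $k-2$ classes of size $m-1$. Inside $V_1$ the missing edges in $H - e$ include $uv$, $uu'$, $vv'$, which share endpoints at $u$ and $v$ and so do not form a matching, making $V_1$ a $G$-free class; the other classes are too small to contain $G$. This yields $\chi_G(H-e)\leq k-1$, and the reverse inequality $\chi_G(H-e)\geq k-1$ holds because any $(k-2)$-coloring of $H-e$ could be extended to a $(k-1)$-coloring of $H$ by isolating one endpoint of $e$ in its own class. A small variant --- grouping $\{u, w, u'\}$ together with $m-3$ other vertices --- handles edges $e = uw$ incident to the apex $w$ when $k$ is odd.

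For the ``only if'' direction, suppose $H$ is $k$-$G$-free-minimal, $H \subseteq K_n$ connected, $\chi_G(H) = k$. Theorem~\ref{t6} forces $|V(H)| = n$. Let $M := E(K_n) \setminus E(H)$ be the set of missing edges. The task splits into two structural claims: (i) $M$ is a matching, and (ii) $M$ saturates all $n$ vertices (when $k$ is even) or exactly $n-1$ of them (when $k$ is odd), the one unsaturated vertex being the apex. For (i), suppose that two missing edges $uv, uw\in M$ share a vertex $u$. Minimality applied to a suitably chosen edge $e$ of $H$ near $u$ yields a $(k-1)$-coloring of $H - e$, and by tracking how $u, v, w$ distribute among the classes and applying Claim~\ref{cl6} inside a class of size $\geq m$, I derive that some class must actually contain $G$, a contradiction. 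For (ii), once $M$ is known to be a matching its size is at most $\lfloor n/2\rfloor$, and the parity of $n$ pins down the saturation count; a second unsaturated vertex $w$ (beyond any apex forced by parity) would be joined in $H$ to every other vertex, and minimality at any edge incident to $w$ would provide a $(k-2)$-coloring of $H$, contradicting $\chi_G(H)=k$.

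The main obstacle is step (i) of the ``only if'' direction: the minimality hypothesis gives only a single local inequality per edge removed, so converting the absence of incident pairs in $M$ into a global matching structure requires, for each putative incident pair $uv, uw\in M$, selecting a specific witness edge of $H$ whose removal exposes the obstruction. The correct choice depends on the local neighborhood of $\{u, v, w\}$ in $H$, and verifying that a good witness always exists --- and genuinely forces a $G$-copy inside some color class of the $(k-1)$-coloring --- is the technical heart of the argument. Once (i) is in hand, the parity/apex bookkeeping in (ii) is routine.
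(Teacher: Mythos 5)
Your ``if'' direction is correct and is essentially the paper's argument: after deleting $e=uv$, the endpoint $u$ is non-adjacent (in $H-e$) to both $v$ and its $1$-factor partner $u'$, so in any $m$-set $S$ containing $u,v,u'$ the vertex $u$ has degree at most $m-3<m-2=\delta(G)$; since a copy of $G$ inside an $m$-vertex set must be spanning, $(H-e)[S]$ is $G$-free, and $S$ together with $k-2$ classes of size $m-1$ on the remaining $(k-2)(m-1)$ vertices gives $\chi_G(H-e)\le k-1$. Your extra check that $\chi_G(H-e)\ge k-1$ is a welcome addition the paper omits.

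The genuine gap is step (i) of your ``only if'' direction, which you yourself flag as the unresolved ``technical heart.'' You propose to show that the set $M$ of missing edges is a matching by applying minimality to a ``suitably chosen witness edge'' and tracking a $(k-1)$-colouring of $H-e$, but you never exhibit the witness or the forced copy of $G$, and as described it is not clear this can be made to work. More importantly, minimality is the wrong tool here: the claim follows from the standing hypothesis $\chi_G(H)=k$ alone. If $uv,uw\in M$ share the vertex $u$, take any $m$-set $S$ with $u,v,w\in S$; then $\deg_{H[S]}(u)\le m-3$, so by the same spanning-copy observation $H[S]$ is $G$-free, and using $S$ as one colour class while splitting the other $(k-2)(m-1)$ vertices into $k-2$ classes of size $m-1$ yields $\chi_G(H)\le k-1$, a contradiction. (This is exactly what the paper's unjustified assertion $\delta(H)\ge (k-1)(m-1)-1$ amounts to.) Your step (ii) also misstates the contradiction: minimality cannot ``provide a $(k-2)$-colouring of $H$.'' The correct finish is that if two vertices $v,v'$ are both unsaturated by $M$ (beyond the single apex allowed by parity when $k$ is odd), they have full degree, so $vv'\in E(H)$ and $H-vv'$ is still $K_{(k-1)(m-1)+1}$ minus a matching; then every $m$-subset of $H-vv'$ contains $G$, every colour class has at most $m-1$ vertices, and $\chi_G(H-vv')\ge k$, contradicting $\chi_G(H-vv')=k-1$ from minimality. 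With these two repairs your parity bookkeeping does close the argument.
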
 
	\begin{proof}
		Suppose that, either $H$ is a graph $K_{(k-1)(m-1)+1}$ minus the edges of a $1$-factor, when $k$ is even, or $H\cong H_1\oplus K_1$, where $H_1$ is a graph $K_{(k-1)(m-1)}$ minus the edges of a $1$-factor, when $k$ is odd. As   $\chi_G(H)=k$, suppose that $e$ be a arbitrary edges of $E(H)$. Therefore, by definition $H$, one can check that $\delta(H\setminus e)=(k-1)(m-1)-3$, that is there exists at least one vertex of $V(H)$, say $v$, so that $\deg(v)=(k-1)(m-1)-3$, and w.l.g suppose that $v',v'' \notin N(v)$. Now, suppose that $S$, be a subsete of $V(H)$ with $m$, member, where $v,v',v''\in S$. As  $v,v',v''\in S$, and  $G$  denote a subgraphs of $K_{m}$ minus the edges of a $\frac{m}{2}K_2$, one can check that $H[S]$ is a $G$-free.  Therefore, since $|V(H)|=(k-1)(m-1)+1$, $|V(G)|=m$,  $|S|=m$, and $H[S]$ is a $G$-free, it is easy to say that $\chi_G(H\setminus e)=k-1$, which means that $H$ is $k$-$G$-free-minimal.
		
		Suppose that $H$ is $k$-$G$-free-minimal, and $k$ is even. As 	$H$ is $k$-$G$-free-minimal and $m,k$ is even, one can say that  $\delta(H)\geq (k-1)(m-1)-1$. Therefore, as $|V(H)|=(k-1)(m-1)+1$ and 	$\delta(H)\geq (k-1)(m-1)-1$, one can check that $K_{(k-1)(m-1)+1}$ minus the edges of a $1$-factor be a subgraph of $H$. Now, assume that  $H'\subset H$, where $H'$ is a graph $K_{(k-1)(m-1)+1}$ minus the edges of a $1$-factor. So there exist at least two vertices of $V(H)$ say $v,v'$, so that $\delta(x)= (k-1)(m-1)$, for each $x\in \{v,v'\}$, that is $vv'\in (H)$. Consider $e=vv'$, as  $H'\subset H$ and $e\notin E(H')$,  we have $H'\subseteq H\setminus e$. Which means that  $k=\chi_G(H')\leq \chi_G(H\setminus e)= k-1$, a contradiction. For the case that $k$ is add, as  $k$ is add and $m$ is even, we have $(k-1)(m-1)+1$ is add. As	$H$ is $k$-$G$-free-minimal we can say that  $\delta(H)\geq (k-1)(m-1)-1$. Therefore, as, $|V(H)|=(k-1)(m-1)+1$, $\delta(H)\geq (k-1)(m-1)-1$ and $(k-1)(m-1)+1$ is add, one can check that $K_{(k-1)(m-1)+1}$ minus the edges of a $tK_2$ be a subgraph of $H$, where $t=\frac{(k-1)(m-1)}{2}$. Hence, there exist a vertex of $V(H)$ say $v$ so that $\deg(v)=(k-1)(m-1)$.
		
		Now, assume that  $H'\subset H$, where $H'$ is a graph $K_{(k-1)(m-1)+1}$ minus the edges of $t'K_2$ be a subgraph of $H'$, where $t'=\frac{(k-1)(m-1)}{2}-1$. So there exist at least three vertices of $V(H)$ say $v,v',v''$, so that $\delta(x)= (k-1)(m-1)$, for each $x\in \{v,v',v''\}$, that is $v'v''\in (H)$. Consider $e=v'v''$, as  $H'\subset H$ and $e\notin E(H')$,  we have $H'\subseteq H\setminus e$. Which means that  $k=\chi_G(H')\leq \chi_G(H\setminus e)= k-1$, a contradiction to $H$ is $k$-$G$-free-minimal.
		
		Therefore in any case if $H$ is $k$-$G$-free-minimal then, either $H$ is a graph $K_{(k-1)(m-1)+1}$ minus the edges of a $1$-factor, when $k$ is even, or $H\cong H_1\oplus K_1$, where $H_1$ is a graph $K_{(k-1)(m-1)}$ minus the edges of a $1$-factor, when $k$ is odd, which means that the proof is complete.
		
	\end{proof}
	
	\bibliographystyle{plain}
	\bibliography{G-free3}

\begin{thebibliography}{10}

\bibitem{aksionov1977uniquely}
VA~Aksionov.
\newblock On uniquely 3-colorable planar graphs.
\newblock {\em Discrete Mathematics}, 20:209--216, 1977.

\bibitem{alishahi2018extremal}
Meysam Alishahi and Ali Taherkhani.
\newblock Extremal g-free induced subgraphs of kneser graphs.
\newblock {\em Journal of Combinatorial Theory, Series A}, 159:269--282, 2018.

\bibitem{bollobas1978extremal}
B~Bollobas.
\newblock Extremal graph theory, london math. s oc. monogr. 11, 1978.

\bibitem{bollobas1978uniquely}
B{\'e}la Bollob{\'a}s.
\newblock Uniquely colorable graphs.
\newblock {\em Journal of Combinatorial Theory, Series B}, 25(1):54--61, 1978.

\bibitem{borowiecki1993classes}
Mieczys{\l}aw Borowiecki and Ewa Drgas-Burchardt.
\newblock Classes of chromatically unique graphs.
\newblock {\em Discrete mathematics}, 111(1-3):71--75, 1993.

\bibitem{Brooks}
R.~L. Brooks.
\newblock On colouring the nodes of a network.
\newblock {\em Proc. Cambridge Philos. Soc.}, 37:194--197, 1941.

\bibitem{catlin1995vertex}
Paul~A Catlin and Hong-Jian Lai.
\newblock Vertex arboricity and maximum degree.
\newblock {\em Discrete Mathematics}, 141(1-3):37--46, 1995.

\bibitem{chartrand1968generalization}
G~Chartrand, DP~Geller, and S~Hedetniemi.
\newblock A generalization of the chromatic number.
\newblock In {\em Mathematical Proceedings of the Cambridge Philosophical
  Society}, volume~64, pages 265--271. Cambridge University Press, 1968.

\bibitem{chartrand1969uniquely}
Gary Chartrand and Dennis~P Geller.
\newblock On uniquely colorable planar graphs.
\newblock {\em Journal of Combinatorial Theory}, 6(3):271--278, 1969.

\bibitem{frick1994extremal}
Marietjie Frick and Michael~A Henning.
\newblock Extremal results on defective colorings of graphs.
\newblock {\em Discrete mathematics}, 126(1-3):151--158, 1994.

\bibitem{MR778402}
Frank Harary.
\newblock Conditional colorability in graphs.
\newblock In {\em Graphs and applications ({B}oulder, {C}olo., 1982)},
  Wiley-Intersci. Publ., pages 127--136. Wiley, New York, 1985.

\bibitem{harary1969uniquely}
Frank Harary, ST~Hedetniemi, and RW~Robinson.
\newblock Uniquely colorable graphs.
\newblock {\em Journal of Combinatorial Theory}, 6(3):264--270, 1969.

\bibitem{harary1985graphs}
Frank Harary and John~Stanley Maybee.
\newblock {\em Graphs and applications: proceedings of the first Colorado
  symposium on graph theory}.
\newblock Wiley-Interscience, 1985.

\bibitem{lovasz1966decomposition}
L{\'a}szl{\'o} Lov{\'a}sz.
\newblock On decomposition of graphs.
\newblock {\em Studia Sci. Math. Hungar}, 1(273):238, 1966.

\bibitem{shaoji1990size}
Xu~Shaoji.
\newblock The size of uniquely colorable graphs.
\newblock {\em Journal of Combinatorial Theory, Series B}, 50(2):319--320,
  1990.

\end{thebibliography}
\end{document}